\crefname{hypothesis}{Hypothesis}{Hypotheses}
\title{Parallel Skeletonization for Integral Equations in Evolving Multiply-Connected Domains
}
\author{John Paul Ryan\thanks{Department of Computer Science, Cornell University, Ithaca, NY 14853 (~\email{\mbox{johnryan@cs.cornell.edu}},
  \email{\mbox{damle@cornell.edu}}).}
\and Anil Damle\footnotemark[1]}
\newcommand{\jrev}[1]{\textcolor{black}{#1}}
\newcommand{\secondrev}[1]{\textcolor{black}{#1}}
\newcommand{\thirdrev}[1]{\textcolor{black}{#1}}
\begin{document}

\maketitle
% REQUIRED
\begin{abstract}
This paper presents a general method for applying hierarchical matrix skeletonization factorizations to the numerical solution of boundary integral equations with possibly rank-deficient integral operators. Rank-deficient operators arise in boundary integral approaches to elliptic partial differential equations with multiple boundary components, such as in the case of multiple vesicles in a viscous fluid flow. Our generalized skeletonization factorization retains the locality property afforded by the ``proxy point method,'' and allows for a parallelized implementation where different processors work on different parts of the boundary simultaneously. Further, when the boundary undergoes local geometric perturbations (such as movement of an interior hole), the \jrev{factorization} can be recomputed \jrev{efficiently with respect to the number of modified discretization nodes.} We present an application that leverages a parallel implementation of skeletonization with updates in a shape optimization regime. 
 \end{abstract}

% REQUIRED
\begin{keywords}
 factorization updating, hierarchical factorizations, boundary integral equations,
 Stokes flow, shape optimization, fast direct solvers
\end{keywords}

% REQUIRED
\begin{AMS}
65R20, 15A23, 76D07\end{AMS}

\section{Introduction}
Consider the homogeneous boundary value problem
\[    Lu({x}) = 0 \qquad {x}\in\Omega
\]
\[
    u({x}) = f({x}) \qquad {x}\in\Gamma
\]
where $L$ is an elliptic differential operator, $f(x)$ is given Dirichlet data on the boundary $\Gamma$, and $u(x)$ is the desired solution in the interior domain $\Omega$. Assuming sufficient smoothness of $\Gamma$ and $f(x)$, the solution can often be efficiently numerically computed by considering associated boundary integral equations:
\begin{equation}\label{eq:basicbieforward}
u(x)=
\int_\Gamma K_{\Omega}({x},{y})\mu({y})\mathrm{d\Gamma}(y) \qquad {x} \in \Omega
\end{equation}
\begin{equation}\label{eq:basicbie}
\secondrev{\lambda \mu(x) +} \int_\Gamma K_\Gamma({x},{y})\mu({y})\mathrm{d\Gamma}(y)  = f({x})\qquad {x} \in \Gamma
\end{equation}
where $K_\Omega$ and $K_\Gamma$ are kernels related to the Green's function and $\mu(x)$ is an intermediary function to be calculated via~\eqref{eq:basicbie}. \secondrev{The scalar $\lambda$ depends on the choice of $K_{\Gamma}.$ When $\lambda=0$, \eqref{eq:basicbie} is a \emph{Fredholm equation of the first kind}, otherwise it is a \emph{Fredholm equation of the second kind}}. This method requires only a discretization of the boundary, and allows for the computation of the solution at any point inside the domain without the need to discretize the entire domain.

Using $K$, $\mu$, and $f$ to denote the discretizations of the integral operator with kernel function $K_\Gamma(x,y)$, the unknown function $\mu(x)$, and the given function $f(x)$ respectively, the discretization of~\eqref{eq:basicbie} is 
\begin{equation}
    \label{eq:biediscretee}
    \secondrev{(\lambda I+K) }\mu = f.
\end{equation}
Solving for $\mu$ given $f$ requires a linear solve involving the dense matrix $\secondrev{(\lambda I+K)}$, and for large discretizations of the boundary, general dense factorizations such as the \emph{LU} decomposition can be prohibitively expensive. For this reason, it is advantageous to consider properties of $K$ that enable more efficient factorizations.

In many cases, $K_\Gamma(x,y)$ satisfies an approximate separability condition:
\begin{equation}\label{eq:sep}
K_\Gamma({x},{y}) \approx \sum_{i=1}^p u_i({x})v_i({y})\qquad  |{x}-{y}|>\gamma.
\end{equation}
\secondrev{One setting in which this property arises is when} $K_\Gamma(x,y) = K_\Gamma(x-y)$ and this function smoothly decays away from the origin, as is the case for many non-oscillatory Green's functions.
For example, in Section~\ref{sec:stokeseq} we examine the following boundary integral equation for computing viscous fluid flow velocities:
\begin{equation}\label{eq:introeq}
 -\frac{1}{2}{\mu}({x}) +
    \frac{1}{\pi}\int_{\Gamma} \frac{({x}-{y})\cdot{n}({y})}{\|{x}-{y}\|_2^4} ({x}-{y})\otimes ({x}-{y}){\mu}({y})\mathrm{d\Gamma}({y}) =  {f}({x}) \qquad x\in\Gamma,
\end{equation}
where ${n}({y})$ is the normal vector to the boundary $\Gamma$ at ${y}$. Since the underlying kernel function satisfies~\eqref{eq:sep}, the discretized integral operator $K$ will have numerically low rank off-diagonal blocks. In Section~\ref{sec:fact} we outline a hierarchical matrix factorization that leverages this property to approximate $K$ by a product of easily invertible matrices \cite{rskel, strongrskel}. Given this approximate factorization of $K$, solving~\eqref{eq:biediscretee} can be done efficiently. Furthermore, as a direct method, this scheme is well suited to solve problems for many different right hand sides (for example, computing the solution in a fixed geometry for different boundary conditions).
 
When the boundary is multiply connected, \emph{i.e.,}\ $\Gamma=\Gamma_0\cup \Gamma_1\cup\dots\cup \Gamma_p$, the integral operator (and hence the matrix $K$) can become rank-deficient. In Section~\ref{sec:stokeseq} we show how this degeneracy arises, and in Section~\ref{sec:generalform} we develop a more general version of the skeletonization factorization applicable to these settings.

Throughout construction of the factorization, we compress off-diagonal blocks by decoupling sets of integration nodes based on analysis of kernel interactions. This compression is performed locally since it depends only on interactions with nearby integration nodes. The technique we use is known as the \emph{proxy point method} \cite{id, kernindfact, kernindfact23d, hifie, strongrskel, corona2013, mrdirect, rskel, direct3d, lineardirect, mindenupdate, stokessolver, direct3d, xing2019}, and is described in Section~\ref{sec:proxsec}. Local modifications to the underlying data $\Gamma$ can be made without requiring full recomputation of the factorization of $K$. Further, the structure of the factorization persists following such modifications, and has all the benefits of a factorization computed from scratch. Section~\ref{sec:factupdate} reviews such a method for updating hierarchical skeletonization factorizations \cite{mindenupdate}.

The fact that compression is performed locally and that updates to the factorization can be made quickly following geometric perturbations make this procedure particularly useful in optimization problems where solutions are calculated for many closely related boundaries. For example, this arises in objective function evaluation, search direction selection, line search, etc. In Section~\ref{sec:optex} we provide illustrative examples of this setting.

\subsection{Background}
The pioneering work in compressing discretizations of kernel matrices based on hierarchical rank structure is the development of the Fast Multipole Method (FMM) \cite{fmm}, which represents interactions between points at a distance by truncated multipole expansions. For problems where analytic expansions cannot be used, significant work has been done to develop kernel-independent methods \cite{kernindfact, kernindfact23d, rskel, hifie}. These methods perform compression algebraically by analyzing matrix entries directly (i.e.,\ requiring samples of kernel interactions instead of analytic expansions). The study of these techniques has involved a broad exploration of the purely algebraic properties of the matrices, instead of the underlying kernel function. In both the analytically and algebraically motivated regimes, these methods may be used to numerically solve linear systems, either via approximate direct solves or using them as preconditioners in iterative methods \cite{mhodlr, corona2013, mrdirect, direct3d, gil}.

From a purely algebraic perspective, matrices displaying off-diagonal rank structure have been analyzed extensively and may fall into many different classes. Hierarchical off-diagonal low rank (HODLR) matrices satisfy the property that, given a tree-structured partitioning of the indices of the matrix, submatrices corresponding to different partitions of indices at the same level of the tree have rank less than $k$, a constant for the whole matrix. Hierarchically semi-separable (HSS) matrices are HODLR matrices with the additional property that each parent node’s basis can be constructed from those of its children (often referred to as a \emph{nested bases}). $\mathcal{H}$ (and $\mathcal{H}^2$) matrices~\cite{hackbusch2015hierarchical,borm2003introduction,bebendorf2008hierarchical} are like HODLR (and HSS) matrices except that there is a so-called admissibility condition. Admissibility conditions dictate which pairs of partitions correspond to low-rank sub-matrices, such as those that are above a certain distance apart. Throughout this work we will assume our matrices satisfy the \emph{weak admissibility condition}, i.e.,\ all pairs of distinct partitions have low rank, and we refer the interested reader to \cite{strongrskel, ifmm1} for discussion on other admissibility conditions.

In settings where the solution is desired following small perturbations to the boundary geometry (for example, the rotation of a fin in a channel, or a modification to the shape of a wing on an airplane), several methods exist which efficiently solve related problems faster than completely refactoring the underlying matrix. One possibility is to treat the perturbations as low rank updates and apply the Sherman-Woodbury-Morrison formula~\cite{zhang2018fast,zhang2020alternative}, which takes advantage of the fact that the original factorization can be used for a fast linear solve. The advantage of the method in~\cite{zhang2018fast} \secondrev{(and recent improvements~\cite{zhang2020alternative})} is that no significant matrix factorizations need to be recomputed. On the other hand, this method will gradually slow as more low-rank updates are applied (say due to continuing \secondrev{permanent} changes to the boundary) and \secondrev{could} encounter conditioning issues over time\textemdash for this reason in some settings it is desirable to have an updating methodology that retains the form of the factorization while incorporating updates.

Minden, et al. \cite{mindenupdate} present a method for updating hierarchical skeletonizations which preserves the structure of the factorization at each updating step. This is the scheme that we appeal to here to deal with changes to domain boundaries. Importantly, here we focus on schemes applicable to integral equation formulations (other closely related, but distinct work, focuses \jrev{on time dependent problems}~\cite{wang2019fast,greengard2018accurate}). However, rank-structured techniques are also often applicable to discretizations of differential operators~\cite{ho2016hierarchicalDE} and in those settings alternative methodologies may be applicable~\cite{liu2018fast}.

\begin{remark}
\jrev{
For \secondrev{sufficiently} small updates or in settings where the number of \secondrev{permanent} updates \secondrev{to a fixed base geometry} is limited, the technique in~\cite{zhang2018fast} may exhibit better performance \secondrev{than the updating scheme from~\cite{mindenupdate}}, as certain work necessary to build a valid rank-structured factorization for the new problem is avoided. However, in~\cite{zhang2018fast} the authors also explicitly show regimes where using their updating technique may be slower than computing a new factorization. Similarly, the use of Sherman-Woodbury-Morrison means that solves with the updated factorization are slower than those with the base system. In contrast, by updating the factorization directly we avoid these two potential pitfalls. As we will discuss in Section~\ref{sec:comp_complex}, the worst case cost of our method is bounded from above by recomputing the factorization from scratch. This makes our method preferable when, e.g., a valid rank-structured factorization is desired for many different boundary configurations. Furthermore, once we have updated the factorization, linear solves are as fast as with a factorization built from scratch.}
\end{remark}

An important application of rank-structured solvers to problems with changing boundary geometries is in viscous fluid flow simulations \cite{stokessolver,vesicleflow, 3dvesicle}. For example, \cite{vesicleflow} uses a rank-structured solver for a microfluidic flow simulation, namely the deformation of inextensible vesicles in unbounded domains. Biros and Ying \cite{navier} have also demonstrated the applicability of these techniques in numerically solving unsteady Navier-Stokes equations. To our knowledge, no Stokes flow simulators exist that leverage a fast updating scheme to the rank-structured factorizations. Since the performance gains due to fast updating of the factorization accumulate over time, and applications can require $10^6$ or more \cite{vesicleflow} linear solves, we consider the benefits of a factorization updating scheme of substantial importance in this area.

\secondrev{The structure of these factorizations and the scale of problems addressed makes it natural to consider parallel versions of these algorithms. In fact, given the breadth of distinct, albeit closely related, factorization structures there is also extensive work on parallel implementations. The most closely related work to our own, based on factorization type, is~\cite{li2017distributed}. However, that work focuses on a distributed memory setting and sparsely discretized differential operators on regular grids. In contrast, our focus is on integral equations with general geometries. In fact, much of the closely related existing work on parallel implementations~\cite{wang2016parallel,chen2018distributed} focuses on sparse systems. Based on the types of problem being addressed, the most closely related work to our own is STRUMPACK~\cite{rouet2016distributed} for HSS matrices (allowing for the use of randomized schemes to construct the low-rank factors) and the recently developed H2Pack~\cite{huang2020h2pack} for $\mathcal{H}^2$ matrices (using the proxy point method). Our work focuses on distinct factorization formats, contains implementations of the factorization updating algorithms of~\cite{mindenupdate}, and is built to work with the augmented systems necessary for some of the problems we solve.}

\subsection{Contribution}
In this work we present a general formulation of a fast hierarchical matrix factorization applicable to boundary integral equations for simply or multiply connected domains. In addition, we demonstrate the ability of the factorization to be efficiently updated following changes to the boundary geometry. Previous work in this setting experimented with small boundary or coefficient perturbations (e.g., adding a small bump to a simply connected boundary \jrev{\cite{mindenupdate}}) that minimally affect the solution. However, \secondrev{the techniques initially introduced in~\cite{mindenupdate} can be leveraged more generally to allow for the deformation, movement, addition, and deletion of interior holes. In this work, we clearly show the breadth of problem settings to which these techniques can be applied and, most notably, we clearly show how the scheme is viable in settings where the solution significantly changes throughout the domain based on local boundary updates.} This is most evident we solve problems such as Stokes flow or steady-state heat with Neumann conditions in multiply-connected domains with moving interior holes. This represents a powerful new technique for the numerical solution of elliptic PDEs with many related boundary configurations and conditions.

We also present an implementation of the hierarchical factorization which parallelizes compression of nodes at each level of the tree decomposition. Arguing that this level of parallelism is most powerful in the initial factorization, we demonstrate how computational resources can be more efficiently allocated in problems where \jrev{many consecutive local updates} to the factorization are required. This is the case in optimization settings where computing the ideal shape and/or size of interior holes is desired. Our work includes a novel demonstration of the benefits of hierarchical matrix factorization updating for such optimization problems, and this technique holds great promise for related time-dependent simulations, such as the simulation of vesicles in capillary flows. Our accompanying implementation allows for \jrev{easy design} of multiply-connected boundaries \jrev{via user-specified spline knots} and optimization given a user-specified objective function. 

\jrev{The core contribution of this manuscript is, ultimately, the novel combination of all the above pieces---development of the factorization and updating schemes adapted to the formulation of the integral equations applicable to multiply connected domains, implementation of a parallel code amenable to two and three dimensional problems, and numerical experiments illustrating efficacy of the implemented methods and efficiency of the developed software.}

\section{Preliminaries}\label{sec:prelim}
\subsection{Notation}

Throughout this manuscript we will use the matrix subscript notation $K_{AB}$ to refer to the submatrix of $K$ which contains the rows indexed by the index set $A$ and the columns indexed by the index set $B$. We will refer to a submatrix which contains the rows indexed by $A$ and all of the columns as $K_{A, :}$ and, analogously, $K_{:,B}$ is the submatrix which contains the columns indexed by $B$ and all of the rows.
\subsection{Hierarchical matrix factorization}\label{sec:fact}
The factorization we use is based on \cite{rskel}, and is applicable to matrices that arise from kernel discretizations where kernel interactions between distinct boxes of
points are numerically low-rank (i.e., the kernel function satisfies~\eqref{eq:sep}). We begin by showing how to approximate such a kernel matrix as the product of easily invertible sparse matrices.

As a starting point, consider the set of points $B\cup F$ comprised of two disjoint subsets of points indexed by $B$ and $F$ with $|B|\ll|F|$ (think of $B$ as corresponding to points inside a box and $F$ as corresponding to many points outside the box). The matrix of kernel interactions may be written as 
\begin{equation}\label{eq:weak}
K = \begin{bmatrix}
K_{BB} & K_{BF} \\ 
K_{FB} & K_{FF} 
\end{bmatrix}.
\end{equation}
\jrev{We will assume throughout this work that our kernel satisfies the weak admissibility condition\footnote{
If the kernel only satisfies the \emph{strong admissibility condition}, then \eqref{eq:weak} would need blocks for non-low-rank interactions between points inside the box and points outside but nearby, see \cite{strongrskel}.
}, so $K_{FB}$ is numerically low-rank} and we may construct a low-rank approximation to $K_{FB}$ 
\begin{equation}\label{eq:lowrank}
K_{FB} \approx WZ^T.\end{equation}
A popular choice in this setting is the interpolative decomposition \cite{id}
\[W=K_{FS}\qquad Z^T = [I \quad T],\]
where $B$ is partitioned as
\[B =\{S\quad R\}\]
and we omit permutations of indices for the sake of exposition. In this partition,  $S$ and $R$ stand for ``skeleton'' and ``redundant'' indices, respectively. The key idea is that we select the most important subset of indices $S$ in $B$. We then represent interactions with points indexed by $R$ as linear combinations of the important columns. Practically, $S$ and $T$ can be found via a column-pivoted QR factorization \cite{golubqr}, and can be chosen so that~\eqref{eq:lowrank} holds to any desired accuracy $\varepsilon$. We can use this factorization to decouple the redundant points by eliminating the ${FR}$ and ${RF}$ subblocks of $K$\textemdash this can be done by applying interpolation matrices on the left and right in the following manner
\begin{equation}\label{eq:tmat}
\begin{bmatrix}
X_{RR} & X_{RS} & 0 \\ 
X_{SR} & K_{SS} & K_{SF} \\ 
0 & K_{FS} & K_{FF} \\ 
\end{bmatrix} \approx
\begin{bmatrix}
I & -T^T & 0 \\ 
0 & I & 0 \\ 
0 & 0 & I \\ 
\end{bmatrix}\begin{bmatrix}
K_{RR} & K_{RS} & K_{RF} \\ 
K_{SR} & K_{SS} & K_{SF} \\ 
K_{FR} & K_{FS} & K_{FF} \\ 
\end{bmatrix}   \begin{bmatrix}
I & 0 & 0 \\ 
-T & I & 0 \\ 
0 & 0 & I \\ 
\end{bmatrix}.
\end{equation}
We use $X$ to denote a matrix block that has been modified, and we have assumed $K$ is symmetric (Section~\ref{sec:nonsym} discusses nonsymmetric kernels). We may now use block Gaussian elimination to decouple $R$ from the rest of the matrix via
\begin{equation}
\begin{bmatrix}
X_{RR} & 0 & 0 \\ 
0 & X_{SS} & K_{SF} \\ 
0 & K_{FS} & K_{FF} \\ 
\end{bmatrix}
=
G_L
\begin{bmatrix}
X_{RR} & X_{RS} & 0 \\ 
X_{SR} & K_{SS} & K_{SF} \\ 
0 & K_{FS} & K_{FF} \\ 
\end{bmatrix} 
G_R,
\end{equation}
where
\begin{equation}\label{eq:schurmat}
G_L = 
\begin{bmatrix}
I & 0 & 0 \\ 
-X_{SR}X_{RR}^{-1} & I & 0 \\ 
0 & 0 & I \\ 
\end{bmatrix}\qquad
G_R = 
\begin{bmatrix}
I & -X_{RR}^{-1}X_{RS} & 0 \\ 
0 & I & 0 \\ 
0 & 0 & I \\ 
\end{bmatrix}
.
\end{equation}
\jrev{Note that this step involves the inversion of $X_{RR}$---if necessary, we can guarantee $X_{RR}$ is sufficiently well-conditioned by using pivoted factorizations, and moving indices from $R$ into $S$ as needed.}

In summary, by applying easily invertible sparse matrices on the left and right of $K$ we construct a sparser matrix. Importantly, the $K_{FF}$ block remains unchanged and it may be further compressed in the same way. Ultimately, our goal is to iteratively apply this method to sparsify $K$ as much as possible. To that end, we must discuss how to iteratively choose the index sets $B$ and $F$. 
\subsection{Interaction matrices and domain decompositions}
Common sources of hierarchical matrices are problems where entries $K_{ij}$ are kernel interactions between points $x_i$ and $x_j$ in space. Some examples include covariance matrices in Gaussian processes and kernel matrices in boundary integral equation discretizations \cite{mle, martinssonbook}. In these settings, we form a tree decomposition of the domain that reveals compressible subblocks based on relationships between tree nodes. 

Suppose we wish to factor the interaction matrix of a set of points $x_1,...,x_N$ along a circle (see Figure~\ref{fig:treefig}). We first form an adaptive tree decomposition of the domain, such that the leaves of our tree contain no more than a prescribed constant number of points. Working first at the leaf level of the tree, we let $B_i$ be the set of all indices corresponding to points in the $i$th leaf box.  We then let $F_i$ be the set of all indices of points outside of the $i$th leaf box so that we may compress

\begin{figure}[t]
  \centering
  \includegraphics[scale=0.27]{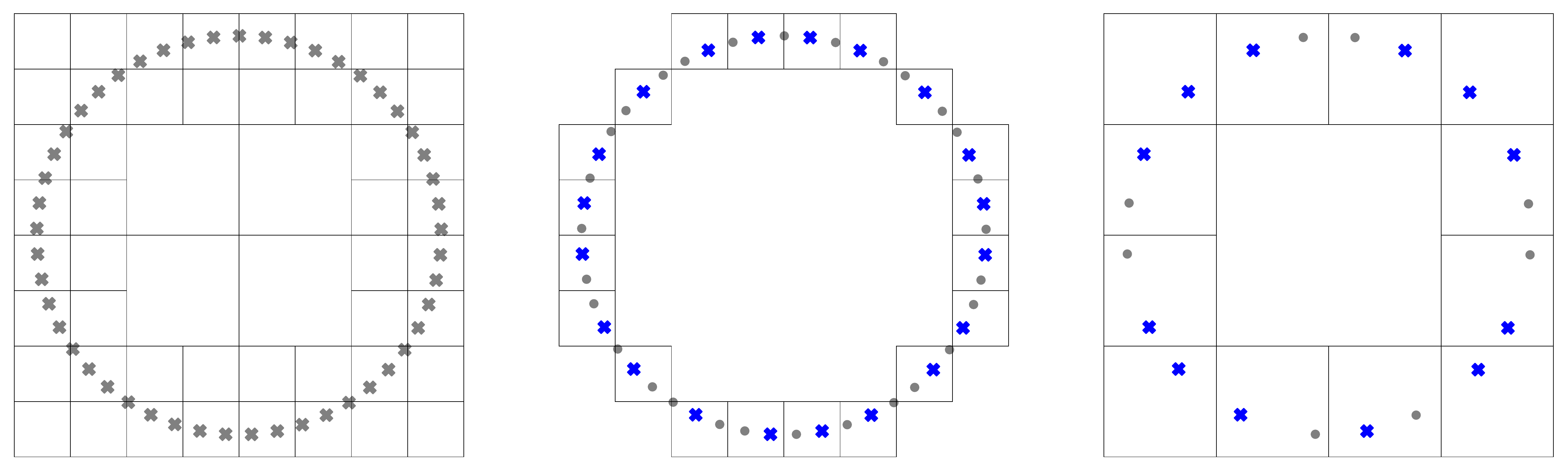}
  \caption{Left: we start by building an adaptive tree decomposition of the domain where the leaf level nodes contain fewer than some prescribed amount of points (grey x's). Center: focusing first on the nonempty leaf nodes with points inside, we perform the compression~\eqref{eq:firstcompress} for every box at the leaf level, partitioning the points in each box into skeleton points (blue x's) and redundant points (grey circles). Right: we continue compression by moving up one level in the tree and considering only the skeleton points from the previous level. We then perform the same compression procedure as before for the new boxes, thereby further shrinking the total number of skeleton points.f 
  Note that the above figures are purely illustrative\textemdash in reality, the ratio of redundant points to skeleton points is typically far greater.}\label{fig:treefig}
\end{figure}

\begin{figure}[t]
  \centering
 \includegraphics[scale=0.4]{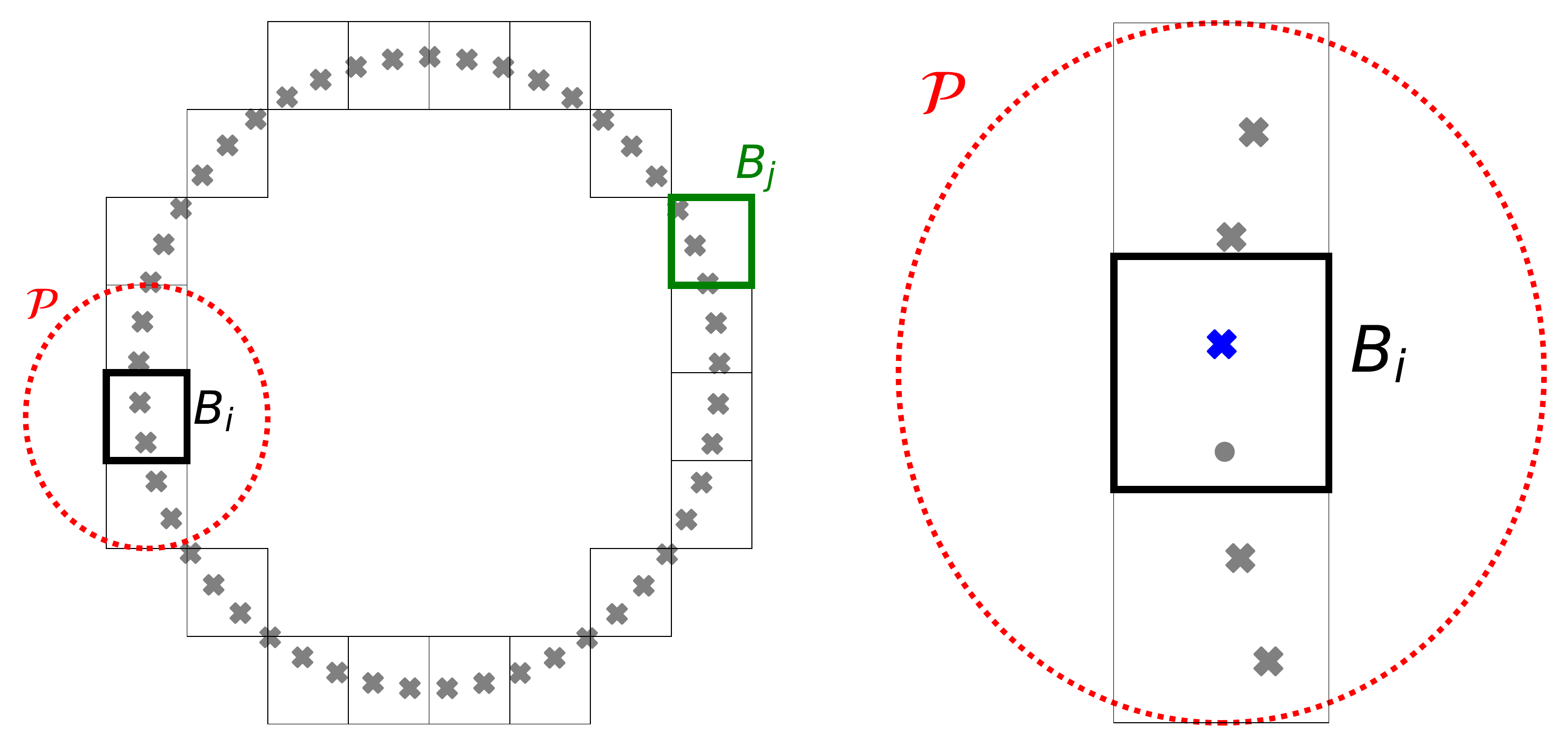}
  \caption{ 
  Left: for many kernels, the interactions between points in a box $B$ (bold) and distant points can be well approximated by considering interactions between the interior points and points on a ``proxy'' circle $\mathcal{P}$ (red). Section~\ref{sec:proxsec} describes how to compute compression matrices for this box using only points on and inside of $\mathcal{P}$. Right: the only information needed to perform compression for a box $B$ are its points' interactions with nearby points and with points along a proxy circle. Importantly, points in distant boxes (for example, in the green box $B_j$ in the left figure) can be added, deleted, or moved around, and the compression matrices and skeleton/redundant partitions for $B$ are still valid. 
  }\label{fig:pxyfig}
\end{figure}

\begin{equation}\label{eq:setup}
K = 
\begin{bmatrix}
K_{B_iB_i} & K_{B_iF_i} \\ 
K_{F_iB_i} & K_{F_iF_i}  \\ 
\end{bmatrix}
\end{equation}
in the following way
\begin{equation}\label{eq:firstcompress}
\begin{bmatrix}
X_{R_iR_i} & 0 & 0 \\ 
0 & X_{S_iS_i} & K_{S_iF_i} \\ 
0 & K_{F_iS_i} & K_{F_iF_i} \\ 
\end{bmatrix}
 \approx
U_i
K  
V_i,
\end{equation}
where the $U_i$ and $V_i$ matrices are products of the block unitriangular compression matrices in~\eqref{eq:tmat} and~\eqref{eq:schurmat}. We will use $B_i$ to refer both to the index set and its corresponding box in the tree, and we will refer to the above procedure as ``compressing box $B_i$.'' In practice we do not explicitly form $U_i$ and $V_i$ but instead apply the constituent matrices in sequence taking advantage of their block unitriangular structure. 

We may then consider $K_{F_iF_i}$ and compress every other box on this level, yielding

\begin{equation}\label{eq:levelcompress}
\begin{bmatrix}
X_{R_1R_1} & 0 & 0 & 0 & 0 & 0 & 0 \\ 
0 & \ddots & 0 & 0 & 0 & 0 & 0\\
0 & 0 & X_{R_mR_m} & 0  & 0 & 0 & 0\\
0 & 0 & 0 & X_{S_1S_1} & K_{S_1S_2} & \dots &  K_{S_1S_m} \\
0 & 0 & 0 &  K_{S_2S_1} & X_{S_2S_2} & \dots &  K_{S_2S_m} \\ 
0 & 0 & 0 &  \vdots & \vdots & \ddots & \vdots\\ 
0 & 0 & 0 &  K_{S_mS_1} & K_{S_mS_2} & \dots & X_{S_mS_m}\\ 
\end{bmatrix}
 \approx
U^{(l)}
K  
V^{(l)},
\end{equation} 
where $l$ is the level number (in \eqref{eq:levelcompress} we have $l=L$ where $L$ is the depth of the tree), $m$ is the number of boxes at level $l$ and 
\begin{equation}\label{eq:prods}
    U^{(l)}=\prod_{i=1}^m U_{m-i+1}\qquad V^{(l)}=\prod_{i=1}^m V_i.
\end{equation}
% (Importantly, each of these products do not matter).

The bottom-right block of~\eqref{eq:levelcompress} is an interaction matrix between the skeleton points that haven't yet been compressed after level $l$, with some modifications to the diagonal blocks. Since the process of~\eqref{eq:setup}-\eqref{eq:firstcompress} relies only on the compressibility of the off-diagonal blocks, we may recurse on this set of skeleton points by moving up a level in the tree (see Figure~\ref{fig:treefig}, right).

Recursing until there are no more far-field interactions to compress (i.e., reaching the root of the tree) yields the factorization
\begin{equation}\label{eq:ddef}
X_D\coloneqq
\begin{bmatrix}
X_{R_1R_1} & 0 & 0 & 0 \\ 
0 & \ddots & 0 & 0 \\
0 & 0 & X_{R_nR_n} & 0  \\
0 & 0 & 0 & X_{SS} \\
\end{bmatrix}
 \approx
U
K  
V,
\end{equation}    
where
\begin{equation}\label{eq:bigprods}
    U=\prod_{l=1}^L U^{(L-l+1)}\qquad V=\prod_{l=1}^L V^{(l)}.
\end{equation}
\jrev{Note that $l=0$ is not included as no compression is performed at the root of the tree.}

In summary, we have produced an approximate factorization 
\[
K\approx \hat{K} = U^{-1}X_DV^{-1},
\]
and an approximate solution to $Kx=b$ may be computed via
\begin{equation}\label{eq:kinv}
    x = K^{-1}b \approx \hat{K}^{-1}b = VX_D^{-1}Ub,
\end{equation}
where factorizations of blocks on the diagonal $X_D$ are computed at factor time, not solve time. The overall accuracy of these approximations is closely related to the prescribed accuracy of the interpolative decompositions, see \cite{lineardirect, rskel, hifie, mrdirect} for further discussion. Notably, the ability to control the accuracy of this factorization also allows for the construction of a good \jrev{preconditioner for an iterative method (see, e.g.,~\cite{hifie} for numerical examples using the factorization as a preconditioner, and~\cite{saad} for further discussion on preconditioned iterative methods). }

% In practice, the dominant cost in~\eqref{eq:kinv} is often factoring the dense subblock $X_{SS}$ of $X_D$. As a result, the overall cost of the factorization is cubic in the final number of skeleton indices. Under mild assumptions on the growth of skeleton points per level, a more detailed complexity analysis can be done\textemdash \cite{hifie} shows $\mathcal{O}(n)$ scaling of the factorization based on the observation that skeleton points tend to cluster near box interfaces. This linear scaling, which we demonstrate empiracally in Section~\ref{sec:ex1}, will require the widely-used ``proxy point method'' to avoid formation of large off-diagonal blocks, which we discuss in Section~\ref{sec:proxsec}.

\subsection{Nonsymmetric matrices}\label{sec:nonsym}
In Section~\ref{sec:fact}, we only use symmetry of $K$ to compress $K_{FB}$ and $K_{BF}$ with the same $T$ matrix. If $K$ is not symmetric, we can still find index sets $S$ and $R$ and a compression matrix $T$ by computing the interpolative decomposition
\[
\begin{bmatrix}
    K_{FS} \\
    K_{SF}^T
\end{bmatrix}T  \approx
\begin{bmatrix}
    K_{FR}\\
    K_{RF}^T
\end{bmatrix}.
\]
This procedure is simple to implement, works well in practice, and avoids having to keep track of distinct skeleton sets and interpolation matrices for each direction.
\subsection{Proxy \jrev{surface} method}\label{sec:proxsec}
A prohibitively costly component of the compression scheme in Section~\ref{sec:fact} is the formation and column-pivoted QR factorization of $K_{FB}$. If $K$ comes from a boundary integral equation corresponding to an elliptic PDE with constant coefficients, we can apply the following widely used remedy \cite{id, kernindfact, kernindfact23d, hifie, strongrskel, corona2013, mrdirect, rskel, direct3d, lineardirect, mindenupdate, stokessolver, xing2019}. Consider drawing a circle $\mathcal{P}$ around box $B$ (see Figure~\ref{fig:pxyfig}, left) and partitioning the index set $F$ into $F=F_{int}\cup F_{ext}$, where $F_{int}$ are indices corresponding to points inside $\mathcal{P}$ and $F_{ext}$ are outside. We can then write
\begin{equation}\label{eq:greens}
K_{FB} = 
\begin{bmatrix}
K_{F_{int}B}\\ 
K_{F_{ext}B}
\end{bmatrix}
 \approx
 \begin{bmatrix}
I & 0\\ 
0 & M_{F_{ext}P}
\end{bmatrix}
\begin{bmatrix}
K_{F_{int}B}\\ 
K_{PB}
\end{bmatrix}
\end{equation}   
\jrev{where $P$ refers to a set of discretization nodes on $\mathcal{P}$ and} the approximate representation $K_{F_{ext} B} \approx M_{F_{ext}P}K_{P B}$ is derived from discretizing 
\begin{equation}
    K(x-z) = \int_\mathcal{P} K(x-y) \jrev{\phi_z(y)}\mathrm{d}y.
\end{equation}
\jrev{where $z$ is any point in $F_{ext}$.}
The existence of this representation follows from Green's theorem (\jrev{for, e.g., oscillatory kernels, some care must be taken via concentric proxy surfaces or single and double layer representations, see \cite{mrdirect, rskel, hifie} for further discussion)}. Instead of performing a costly compression of $K_{FB}$, we compress the asymptotically smaller matrix on the RHS of~\eqref{eq:greens}

\begin{equation}\label{eq:gammaT}
\begin{bmatrix}
K_{F_{int}B}\\ 
K_{P B}
\end{bmatrix} \approx
\begin{bmatrix}
K_{F_{int}S}\\ 
K_{P S}
\end{bmatrix}
\begin{bmatrix}
I & T
\end{bmatrix}.
\end{equation}
Using the interpolative decomposition \eqref{eq:gammaT} in conjunction with~\eqref{eq:greens}, we see that the index set $S$ and interpolation matrix $T$ also compress the matrix $K_{FB}$. The additional error we incur from the integral discretization is related to the size of the discretization and the behavior of the kernel. In most cases, we fix the number of discretization nodes on $\mathcal{P}$ so that $|P|\ll |F_{ext}|$ and the integral discretization error is negligible compared to the interpolative decomposition error.

Although the above is sufficient for this work, we note that the use of proxy points can be extended to more general kernels. For example, common kernels arising when working with kernelized Gaussian processes necessitate the use of a proxy annulus \cite{mle}. Xing and Chow \cite{xing2019} present an algorithmic procedure for selecting proxy points for a given kernel. 

\subsection{Computational complexity}
\label{sec:comp_complex}
\jrev{The computational complexity of the recursive skeletonization factorization has been analyzed in \cite{rskel,hifie} in the context of integral equations for elliptic operators\textemdash here we review some of their results as we will use them later. Let $k_l$ refer to the maximal number of skeleton indices in a box on level $l.$ By assumption, for the weakly admissible problems of interest here $k_l$ can be bounded as 
\begin{equation}\label{eq:kl}
    k_l = 
    \begin{cases}
        \mathcal{O}(L-l) & d=1 \\
        \mathcal{O}(2^{(d-1)(L-l)}) & d>1
    \end{cases},
\end{equation}
where $d$ is the intrinsic dimension of the boundary and $L$ is the depth of the tree.
This follows from the observation that $k_l$ is on the order of the interaction rank between two adjacent blocks at level $l$. Assuming this growth leads to the following result on the complexity of constructing the factorization and subsequent matrix-vector multiplications or linear solves.}

\begin{theorem}[from \cite{rskel,hifie}]\label{thm:rscost}
\jrev{Assuming~\eqref{eq:kl} holds, the computational cost $t_f$ of the recursive skeletonization factorization for an $N$ node discretization of a boundary integral equation with a boundary of intrinsic dimension $d$ is 
\begin{equation}\label{eq:rscost}
t_f = 
        \begin{cases}
        \mathcal{O}(N) & d=1 \\
        \mathcal{O}(N^{3(1-1/d)}) & d>1
    \end{cases}.
\end{equation}
Furthermore, the cost $t_f^{(l)}$ associated with level $l$ in the factorization is 
\begin{equation}\label{eq:rskellevel}
t_f^{(l)} = 
        \begin{cases}
        2^{dl}\mathcal{O}(k_l^3) & l<L \\
        2^{dL}\mathcal{O}(c^3) & l=L
    \end{cases},
\end{equation}
where $c$ is the maximum number of indices in a box at the leaf level.
The cost of then applying the factorization to a vector or performing a linear solve is
\begin{equation}\label{eq:rsascost}
t_{a/s} = 
        \begin{cases}
        \mathcal{O}(N) & d=1 \\
        \mathcal{O}(N\log{N}) & d=2\\
        \mathcal{O}(N^{2(1-1/d)}) & d>2
    \end{cases}.
\end{equation}}
\end{theorem}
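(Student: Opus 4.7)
The plan is to analyze the factorization level by level, bounding the per-box work and then multiplying by the number of boxes on each level, before summing over levels. Once the factorization cost is in hand, the apply/solve cost follows from a parallel analysis with cubic work replaced by quadratic work per box.

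First I would bound the per-box work at level $l<L$. Compressing a single box $B_i$ requires (i) forming the interpolative decomposition of the combined matrix of near-field and proxy-surface interactions from~\eqref{eq:gammaT}, and (ii) performing the block Gaussian elimination in~\eqref{eq:schurmat}, which involves inverting $X_{R_iR_i}$ and applying the resulting operators to the remaining skeleton blocks. The key point is that, thanks to the proxy surface method of Section~\ref{sec:proxsec}, the row dimension of the matrix being decomposed is $|F_{int}|+|P|$, both of which are $\mathcal{O}(k_l)$ (the near-field by balanced-tree geometry, the proxy count by construction). Since the column dimension is also $\mathcal{O}(k_l)$, the pivoted QR and subsequent Schur complement updates each cost $\mathcal{O}(k_l^3)$. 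At the leaf level $l=L$, the near-field contains $\mathcal{O}(c)$ indices, giving $\mathcal{O}(c^3)$ per box. Multiplying by the $\mathcal{O}(2^{dl})$ boxes at level $l$ yields~\eqref{eq:rskellevel}.

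Next I would sum~\eqref{eq:rskellevel} over $l=0,\dots,L$, using~\eqref{eq:kl} and $N=\mathcal{O}(2^{dL})$. For $d=1$ the level cost is $\mathcal{O}(2^l(L-l)^3)$, which is a geometric series dominated by $l=L$, giving a total of $\mathcal{O}(2^L)=\mathcal{O}(N)$ after absorbing the leaf contribution $\mathcal{O}(2^L c^3)$. For $d>1$ the level cost is
\begin{equation*}
2^{dl}\cdot\mathcal{O}(2^{3(d-1)(L-l)})=\mathcal{O}\bigl(2^{3(d-1)L-(2d-3)l}\bigr),
\end{equation*}
and since $2d-3>0$ for $d\geq 2$ the sum is dominated by $l=0$, producing $\mathcal{O}(2^{3(d-1)L})=\mathcal{O}(N^{3(1-1/d)})$, which dominates the leaf cost. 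This yields~\eqref{eq:rscost}. The apply/solve cost proceeds identically, except that applying each of the unitriangular factors in $U^{(l)}$, $V^{(l)}$ and solving with $X_D$ costs $\mathcal{O}(k_l^2)$ per box rather than $\mathcal{O}(k_l^3)$. The same geometric summation then yields $\mathcal{O}(N)$ for $d=1$, $\mathcal{O}(N\log N)$ for $d=2$ (since the summand $2^{2L}$ is independent of $l$, picking up a factor of $L=\mathcal{O}(\log N)$), and $\mathcal{O}(N^{2(1-1/d)})$ for $d>2$.

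The main obstacle is justifying that per-box work is truly $\mathcal{O}(k_l^3)$ rather than scaling with the number of far-field indices. This is exactly what the proxy surface method buys us: by~\eqref{eq:greens}, the compression of $K_{F_iB_i}$ can be replaced by compression of a matrix whose row count depends only on a fixed discretization of $\mathcal{P}$ together with the geometrically bounded near-field, independent of $N$. Establishing this rigorously requires appealing to Green's theorem for the kernel in question, and this is precisely where the bounds~\eqref{eq:kl} on the skeleton sizes enter\textemdash they are the rank bounds of the off-diagonal blocks for elliptic kernels on $d$-dimensional boundaries, whose justification we inherit from~\cite{rskel,hifie}. The remaining algebraic manipulations are the routine geometric sums sketched above.
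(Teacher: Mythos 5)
The paper states this theorem as a cited result from \cite{rskel,hifie} and does not supply its own proof, so there is no internal argument to compare against. Your reconstruction is essentially correct and follows the standard accounting from those references: proxy compression reduces per-box work to $\mathcal{O}(k_l^3)$ (resp.\ $\mathcal{O}(k_l^2)$ for apply/solve), there are $\mathcal{O}(2^{dl})$ boxes at level $l$, and geometric summation against~\eqref{eq:kl} yields the stated totals, with the sum switching from leaf-dominated to root-dominated as $d$ grows and the intermediate $d=2$ case of the apply cost correctly picking up a factor of $L=\mathcal{O}(\log N)$. Two small refinements are warranted. First, the paper notes that no compression is performed at $l=0$; for $d>1$ the dominant cost is the direct factorization of the $\mathcal{O}(k_0)\times\mathcal{O}(k_0)$ block $X_{SS}$ at the root, which is $\mathcal{O}(k_0^3)=\mathcal{O}(N^{3(1-1/d)})$---same asymptotics as your ``dominated by $l=0$'' observation, but the attribution should be to the root solve rather than to a level-$0$ compression that does not occur. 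Second, the claim $|F_{int}|=\mathcal{O}(k_l)$ is asserted rather than argued; the justification, implicit in the cited works, is that once all finer levels have been compressed, the near-field of a level-$l$ box consists only of skeleton indices from a bounded (dimension-dependent) number of neighboring boxes, each contributing $\mathcal{O}(k_l)$ points, so the interpolation problem in~\eqref{eq:gammaT} has row and column dimensions both $\mathcal{O}(k_l)$.
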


\jrev{In practice, skeleton points tend to cluster around the interfaces of boxes. Consequently, in~\cite{hifie}, Ho and Ying introduce the hierarchical interpolative factorization for integral equations (HIF-IE), which extends the above recursive skeletonization. Specifically, HIF-IE includes additional levels in the tree to further compress the interfaces between boxes before stepping up to a coarser level of the tree, resulting in slower growth in $k_l$. Their work results in better behavior of factorization and application costs, and the gains are strongly supported by experimental evidence. \secondrev{Although our experiments do not leverage this extension, we briefly review the improved complexities achieved by HIF-IE}. 
\begin{theorem}[from \cite{hifie}]\label{thm:hifie} Assuming $k_l=\mathcal{O}(L-l)$, then the cost $t_f$ of computing an HIF-IE factorization for an $N$ node discretization of a boundary integral equation with a boundary of intrinsic dimension $d$ is given by
\begin{equation}\label{eq:hifcost}
t_f =  \begin{cases}
        \mathcal{O}(N\log{N}) & d=2 \\
        \mathcal{O}(N\log^6{N}) & d=3
    \end{cases}.
\end{equation}
In particular, the cost $t_f^{(l)}$ associated with level $l$ in the factorization is 
\begin{equation}\label{eq:hiflevel}
t_f^{(l)} = 
        \begin{cases}
        2^{dl}\mathcal{O}(k_l^3) & l<L \\
        \mathcal{O}(2^{dL}c^3) & l=L
    \end{cases}.
\end{equation}
The cost of then applying the factorization to a vector or performing a linear solve is given by
\begin{equation}\label{eq:hifascost}
t_{a/s} = 
        \begin{cases}
        \mathcal{O}(N\log{\log{N}}) & d=2 \\
        \mathcal{O}(N\log^2{N}) & d=3
    \end{cases}.
\end{equation}
\end{theorem}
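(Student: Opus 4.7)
The plan is to derive the per-level cost first and then sum over levels to obtain the total factorization cost; the apply/solve bound will then drop out from the structure of the factorization recorded in~\eqref{eq:ddef}--\eqref{eq:bigprods}. At a non-leaf level $l$ of the hierarchy there are $\mathcal{O}(2^{dl})$ active boxes, each carrying at most $k_l$ skeleton indices inherited from the previous (finer) level. For a single box, the compression consists of assembling the proxy-accelerated interaction matrix of~\eqref{eq:gammaT}, running a column-pivoted QR of size $\mathcal{O}(k_l)\times\mathcal{O}(k_l)$ to produce $S$ and $T$, and then performing the block unitriangular updates and Schur complement elimination of~\eqref{eq:tmat}--\eqref{eq:schurmat}. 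All of these steps are dense linear algebra on $\mathcal{O}(k_l)\times\mathcal{O}(k_l)$ matrices, so each box costs $\mathcal{O}(k_l^3)$ and the per-level bound $t_f^{(l)}=2^{dl}\mathcal{O}(k_l^3)$ follows. The leaf bound $\mathcal{O}(2^{dL}c^3)$ is identical with the constant leaf size $c$ in place of $k_L$.

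For the total cost I would substitute $k_l=\mathcal{O}(L-l)$ and evaluate
\[
t_f \;=\; \sum_{l=0}^{L-1} 2^{dl}(L-l)^3 \;+\; \mathcal{O}(2^{dL}c^3),
\]
also accounting for the intermediate interface-compression sublevels (edges when $d=2$; faces and edges when $d=3$) that HIF-IE inserts between consecutive cell levels. After reparametrization $j=L-l$ and using $2^{dL}=\Theta(N)$ together with $L=\mathcal{O}(\log N)$, the bookkeeping produces the claimed $\mathcal{O}(N\log N)$ and $\mathcal{O}(N\log^6 N)$ bounds, with the logarithmic factors tracking both $L$ itself and the number of interface sublevels introduced per cell level.

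The apply/solve analysis reuses the same level-by-level decomposition. After factorization, $\hat K^{-1}=V X_D^{-1} U$ is a product of block unitriangular factors whose nonzero blocks have size at most $\mathcal{O}(k_l)$ and which act on $\mathcal{O}(2^{dl})$ disjoint blocks per level. Multiplying this product by a vector (and likewise solving) therefore costs $\sum_l 2^{dl}\mathcal{O}(k_l^2)$, which under $k_l=\mathcal{O}(L-l)$ yields the stated $\mathcal{O}(N\log\log N)$ and $\mathcal{O}(N\log^2 N)$ bounds via the same sum-reparametrization as above.

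The main obstacle is not the level summation, which is essentially a direct accounting exercise, but the justification of the rank bound $k_l=\mathcal{O}(L-l)$ itself. Unlike the weakly admissible recursive skeletonization case, where Theorem~\ref{thm:rscost} uses $k_l=\mathcal{O}(2^{(d-1)(L-l)})$, the HIF-IE bound only holds because the additional interface compressions drive skeletons to cluster on progressively lower-dimensional interfaces between boxes. Establishing this requires analyzing how Green's-function interactions restricted to box boundaries decay, and the careful geometric argument of~\cite{hifie} is what makes the whole complexity analysis go through; once that rank bound is in hand, the rest of the proof is the routine summation sketched above.
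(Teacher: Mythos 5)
This theorem is tagged ``[from \cite{hifie}]'' and the paper supplies no proof of it; it is a black-box citation of Ho and Ying's HIF-IE complexity analysis, offered for context alongside Theorem~\ref{thm:rscost} but not used in the paper's own updating experiments (which explicitly do not leverage HIF-IE). So there is no in-paper argument to compare your sketch against, and a reviewer here should really only be checking whether your proposed route would actually establish the stated bounds.

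On that score there is a genuine gap. Your per-level accounting $t_f^{(l)} = 2^{dl}\,\mathcal{O}(k_l^3)$ is fine, but the step from there to the total cost does not work as written. With $k_l = \mathcal{O}(L-l)$ the sum
\[
\sum_{l=0}^{L-1} 2^{dl}(L-l)^3 \;=\; 2^{dL}\sum_{j=1}^{L} \frac{j^3}{2^{dj}} \;=\; \Theta\!\left(2^{dL}\right) \;=\; \Theta(N)
\]
because the inner series converges for every $d\ge 1$. Straightforward level-by-level summation therefore gives $\mathcal{O}(N)$, not $\mathcal{O}(N\log N)$ or $\mathcal{O}(N\log^6 N)$; the logarithmic factors in~\eqref{eq:hifcost} and~\eqref{eq:hifascost} cannot be produced by ``the bookkeeping'' of this sum. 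They come instead from the careful accounting in \cite{hifie} of the additional interface-skeletonization sublevels (edges for $d=2$, faces then edges for $d=3$), where the pre-compression skeleton counts on cell boundaries scale like powers of $k_l$ rather than like $k_l$ itself, and where the constant number of sublevels inserted per cell level interacts with the recompression costs in a way your sketch does not track. Declaring this the routine part while calling the rank assumption the main obstacle inverts the actual difficulty: the theorem is explicitly conditional on $k_l = \mathcal{O}(L-l)$, so you do not need to establish the rank bound, but you do need to carry out the sublevel cost analysis, which is precisely the part your argument leaves unsubstantiated.
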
}

\subsection{Parallelization}\label{sec:parallel}
Consider the impact of the order in which we compress boxes on a level. After compressing the first box on the leaf level, the factorization looks like 
\begin{equation}\label{eq:parfact}
\begin{bmatrix}
X_{RR} & 0 & 0 \\ 
0 & X_{SS} & K_{SF} \\ 
0 & K_{FS} & K_{FF} \\ 
\end{bmatrix}
\approx
U
\begin{bmatrix}
K_{RR} & K_{RS} & K_{RF} \\ 
K_{SR} & K_{SS} & K_{SF} \\ 
K_{FR} & K_{FS} & K_{FF} \\ 
\end{bmatrix} 
V.
\end{equation}
In particular the data required to compress subsequent boxes at the leaf level is nearly the same. The only exception is the introduction of zeros in the $K_{RF}$ and $K_{FR}$ blocks. 

This observation allows us to compress every box on a level in parallel. To see this, notice that given $T$ from an interpolative decomposition of the dense matrix
\[
\begin{bmatrix}
A \\ B 
\end{bmatrix} 
\approx
\begin{bmatrix}
A_{skel} \\ B_{skel}
\end{bmatrix}
\begin{bmatrix}
I &  T
\end{bmatrix},
\]
the same $T$ also satisfies
\[
\begin{bmatrix}
A \\ 0 
\end{bmatrix} 
\approx
\begin{bmatrix}
A_{skel} \\ 0
\end{bmatrix}
\begin{bmatrix}
I & T
\end{bmatrix}.
\]
Consequently, we may compress all boxes on a given level in parallel (neglecting to propagate the zeros in the LHS of~\eqref{eq:parfact} until moving onto the next level) and still achieve an accurate factorization. 

A trade-off is that there may be less compression when parallelizing this way since the skeleton sets are found by solving slightly larger interpolation problems than necessary. Fortunately, use of the proxy surface mitigates this concern\textemdash zeros introduced outside the proxy surface of a given box have no effect on its compression (see Figure~\ref{fig:pxyfig}, right). \jrev{The interpolation problem is only made larger by the consideration of interactions between the box's indices and indices in $F_{int}$ which may have been zeroed out by the compression of other boxes on the same level. In practice the effect on compression is small, and the speedup due to parallelizing an entire level outweighs the cost due to marginally less efficient compression of boxes.} 

We make note of one further opportunity for parallelism that exists regardless of whether or not the above parallel compression scheme is employed during factorization. Consider applying $V^{(l)}$ in~\eqref{eq:prods} to a vector
\[
V^{(l)}x=\left(\prod_{i=1}^m V_{i}\right)x.
\]
The block unitriangular matrices $V_i$ act on disjoint sets of components of $x$ (in particular, $V_i$ acts only on those $x_j$ where $j\in B_i$), and hence can be applied in parallel. The application of $U^{(l)}$ can also be parallelized for the same reason, as can $X_D$ as a block diagonal matrix, and also the inverses of these matrices. 
\subsection{Fast updating after perturbation}\label{sec:factupdate}
One major benefit of this form of factorization is that if discretization nodes are added, deleted, or moved around in a small area of the domain (such that relatively few of the leaf-node boxes are affected) the factorization can be updated with relative ease while maintaining the same structure. 

Suppose that some points in the green box of Figure~\ref{fig:pxyfig}, left, are perturbed. Due to the locality afforded by the proxy method, compression of the bold-faced box $B_i$ is completely unaffected. In other words, compression matrices associated with $B_i$ ($X_{R_i,R_i}$, $X_{S_i,S_i}$, $U_i$ and $V_i$) will be the same before and after modifications to points inside the green box $B_j$, and hence we needn't recompute them. Similarly, corresponding blocks along the diagonal of $X_D^{-1}$ in \eqref{eq:kinv} are unchanged, and their $LU$ factorizations may be reused. \jrev{As we move up the tree we only need to perform updates for boxes that contain modified points or those sufficiently close to them. We omit the detailed rules for selecting boxes for recomputation, but see \cite{mindenupdate} for further discussion. Importantly, the only boxes in need of recompression are $B_j$'s ancestors and a small number of boxes nearby them and the cost of these recomputations is summarized in Lemma~\ref{lem:updcost} for modifications that only affect a single leaf node.}
\begin{lemma}\label{lem:updcost}
\jrev{
Suppose points are added to, removed from, and/or modified within a single box $B_j$ at the leaf level after an initial recursive skeletonization factorization is performed. The cost $t_{uf}^{(l)}$ of recomputing the block matrices level $l$ for the updated factorization is 
\begin{equation}\label{eq:rskelupdlevel}
t_{uf}^{(l)} = 
        \begin{cases}
        \mathcal{O}(k_0^3) & l=0 \\
        M_l^{d}\mathcal{O}(k_l^3) & 0<l<L \\
        M_L^d\mathcal{O}(c^3) & l=L
    \end{cases},
\end{equation}
where $M_l$ is a small constant independent of $N$ and $L$ that is bounded from above by $5$ for $d\leq 3$ and $c$ is the number of points in the modified leaf node. \secondrev{When} $k_l=\mathcal{O}(L-l)$, the total cost of updating the factorization is $t_{uf}=\mathcal{O}(m\log^4{N})$, where $m$ is the number of perturbed points.}
\end{lemma}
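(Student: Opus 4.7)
The plan is to decouple the update analysis into (i) identifying the set of boxes at each level whose compression must be recomputed and (ii) bounding the per-box cost, and then summing over all levels of the tree. The per-box costs are already implicit in Theorem~\ref{thm:rscost}; the new content is a geometric argument that, at each level, only a bounded number of boxes require recomputation.

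First I would establish the localization property: by the proxy surface construction in Section~\ref{sec:proxsec}, the compression matrices $U_i$, $V_i$, $X_{R_iR_i}$, $X_{S_iS_i}$ for a box $B_i$ depend only on kernel interactions with the discretization nodes lying inside $B_i$'s proxy surface. Consequently, after perturbing points in a single leaf $B_j$, a box $B_i$ at level $l$ needs its compression recomputed only if the level-$l$ ancestor of $B_j$ either equals $B_i$ or lies inside the proxy surface of $B_i$. Since the proxy surface of a box extends only a constant multiple of the box's own diameter, the number of boxes at level $l$ whose proxy surface can enclose a given ancestor is bounded by a constant of the form $M_l^d$, where $M_l$ counts the number of such boxes in any one coordinate direction. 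A direct geometric count (proxy surfaces extending by a fixed factor around each box, tiled in a grid at level $l$) gives $M_l \leq 5$ for $d\leq 3$. At the root ($l=0$) there is only one box, so no such multiplicative factor appears.

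Having fixed the number of boxes needing recomputation, I would then quote the per-box compression costs from the analysis underlying Theorem~\ref{thm:rscost}: at levels $0 < l < L$ each box carries at most $k_l$ skeleton indices and the interpolative decomposition plus block elimination cost $\mathcal{O}(k_l^3)$; at the leaf level $l=L$ a single box has at most $c$ indices and costs $\mathcal{O}(c^3)$. Multiplying by the counts $M_l^d$ (and $1$ at the root) yields the stated per-level costs in~\eqref{eq:rskelupdlevel}. Summing over levels and substituting $k_l = \mathcal{O}(L-l)$ with $L = \mathcal{O}(\log N)$ gives
\[
t_{uf} \;=\; \mathcal{O}(k_0^3) + \sum_{l=1}^{L-1} M_l^d\,\mathcal{O}(k_l^3) + M_L^d\,\mathcal{O}(c^3) \;=\; \mathcal{O}\!\left(\sum_{l=0}^{L-1}(L-l)^3\right) + \mathcal{O}(1) \;=\; \mathcal{O}(\log^4 N),
\]
where we absorb the constant factor $M_l^d$ and the bounded leaf capacity $c$. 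For $m$ perturbed points that may straddle several leaves, the affected leaves number at most $\mathcal{O}(m)$, and the above single-leaf bound applies to each, yielding the stated $\mathcal{O}(m \log^4 N)$ total.

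The main obstacle I anticipate is the geometric constant $M_l \leq 5$ for $d\leq 3$. Making this rigorous requires committing to the specific shape of the boxes and proxy surfaces used (as described in Section~\ref{sec:proxsec}) and then explicitly enumerating, in each coordinate direction, which neighboring boxes' proxy surfaces can contain the ancestor of $B_j$. The precise constant depends on the multiplicative separation between a box and its proxy surface; the bound of $5$ corresponds to the typical choice in the literature (e.g.,~\cite{mindenupdate, hifie}). A minor supplementary concern is verifying that the level-$0$ cost is independent of the perturbation size; this follows because the root compression always operates on $k_0 = \mathcal{O}(1)$ skeleton indices under the assumptions of Theorem~\ref{thm:rscost}.
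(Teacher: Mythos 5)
Your proof follows the same outline as the paper's: locate the boxes needing recomputation at each level, bound their count by $M_l^d$ via locality of the proxy surface, multiply by the per-box compression cost from Theorem~\ref{thm:rscost}, and sum over levels using $k_l = \mathcal{O}(L-l)$. The paper's proof of this lemma is considerably terser than yours, explicitly deferring both the $5^d$ per-level bound and the total-cost summation to \cite{mindenupdate}; your version usefully carries out the summation $\mathcal{O}(k_0^3) + \sum_l M_l^d \mathcal{O}(k_l^3) + \mathcal{O}(c^3) = \mathcal{O}(\log^4 N)$ in full.

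One imprecision in your localization step is worth flagging. You characterize the level-$l$ recomputed set as boxes whose proxy surface contains $B_j$'s level-$l$ ancestor. In fact, a box at level $l$ must also be recomputed if its proxy region contains \emph{any} box whose skeleton set changed, and skeleton sets change not only in $B_j$'s ancestors but also in neighboring boxes at finer levels whose own proxy regions contained a recomputed descendant. This propagation must be shown not to grow without bound as one moves up the tree, and that is precisely the content of the geometric argument in \cite{mindenupdate}: measured in level-$l$ box widths, the spread of recomputed boxes contracts (box widths double at each coarser level), so the cascade radius converges to a fixed constant, yielding $M_l \leq 5$ for $d\leq 3$ with the standard proxy radius. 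Your final asymptotics are unaffected --- the recomputed set is still $\mathcal{O}(1)$ per level --- but the set you describe is strictly smaller than the true one, and the constant $5$ bounds the settled cascade rather than only the static geometry of one proxy surface around the ancestor, a distinction your ``main obstacle'' paragraph gestures toward but does not fully name.
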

\begin{proof}
\jrev{
The root box will always be an ancestor of $B_j$ and need to be refactored, giving the cost at $l=0.$ Similarly, at the leaf level $l=L$ only the box with updated points and its neighbors need to be refactored, each at a cost of $\mathcal{O}(c^3).$ At other levels, from~\cite{mindenupdate} the number of boxes needing recomputation is bounded from above by $5^d$ for $d\leq 3,$ each at a cost of $\mathcal{O}(k_l^3).$ Finally, the proof of the total cost is given in \cite{mindenupdate}.}
\end{proof}
\jrev{If $d=1$ and $k_l=\mathcal{O}(L-l)$ we observe that the cost of updating a single leaf level box is polylogarithmic in $N$. However, in the worst case updates across sufficiently many leaf level boxes will result in complexity equivalent to recomputing the factorization from scratch. For $d>1$ \secondrev{the cost is dominated by factoring at the root node, and is asymptotically the same as a refactorization from scratch. Nevertheless, Section~\ref{sec:results} demonstrates that there is still notable gains seen in practice when the number of affected leaf level boxes is relatively small. If the recompression techniques of HIF-IE are used to control the growth of $k_l$ to be $\mathcal{O}(L-l)$, then the asymptotic cost of updating is given by Lemma~\ref{lem:updcost}, although the practical gains seen are dependent on problem geometry (see \cite{mindenupdate} for a 2D volumetric experiment).} In either case, this updating scheme requires recomputation of the $X_{SS}$ block's factorization; a more desireable technique would not require recomputations near the top of the tree and is the subject of ongoing research.}

\begin{remark}
\jrev{
Implicit in the way our factorization is updated, the cost of updating a factorization can never exceed the cost of computing one from scratch. In practice we simply mark boxes that require updating and then refactor only those nodes in the tree. In the worst case, all the nodes of the tree are marked and we end up recomputing the factorization entirely. Importantly, that means using the procedure from~\cite{mindenupdate} is strictly beneficial\textemdash we save time when possible and fall back on recomputing a full factorization when necessary. In contrast, methods based on augmented systems such as~\cite{zhang2018fast} require determining when it is beneficial to recompute the base factorization.}
\end{remark}

\jrev{As described in \cite{mindenupdate}, the factorization that results from updating (with careful bookkeeping and tree refinement, and assuming the same size/location of the root node) will be \emph{exactly the same} as one would have from factoring anew. Without dynamic tree maintenance or in the case that the root node is shifted, the resulting factorization will be different, although just as accurate, as a new factorization from scratch. }

An important impact of the locality of this updating scheme is that there is less opportunity for parallelism due to the relatively small number of boxes needing recompression per level \jrev{(compare $M_l^d$ in \eqref{eq:rskelupdlevel} with $2^{dl}$ in \eqref{eq:rskellevel})}. We experimentally explore the consequences of these details in Section~\ref{sec:optex}.

\section{Problem description}\label{sec:probdesc}
The procedures described in Section~\ref{sec:prelim} are applicable to a broad range of problems in which certain kernel functions arise. In this work, we mainly focus on their applicability to the computation of steady viscous fluid flow given velocity boundary conditions in a 2D domain. This requires developing a boundary integral formulation of the relevant Stokes equations. We then use these techniques to numerically solve a discretized version of the problem. In this section we describe the relevant differential equations and their representation as integral equations on the boundary. 
\subsection{Stokes equations}\label{sec:stokeseq}
The Stokes equations
\begin{equation}\label{eq:stokes}
    -\Delta {u}({x}) + \nabla p({x}) = 0, \qquad
    \nabla \cdot {u}({x}) = 0,
\end{equation}
describe steady-state fluid velocity ${u}(x)\in\mathbb{R}^2$ and pressure $p(x)\in\mathbb{R}$ in incompressible flows where inertial forces are very small compared to viscous forces. They come from taking the limit of the non-dimensionalized Navier-Stokes equations as the Reynold's number vanishes. 

Consider the interior boundary value problem with boundary $\Gamma$ and corresponding Dirichlet data
\begin{equation}\label{eq:boundarycondition}
    {u}({x}) = {f}({x}), \qquad {x}\in \Gamma.
\end{equation}
Due to the linearity of \eqref{eq:stokes}, we may (under mild assumptions, see \cite{HsiaoBIE} Section 2.3) solve for the fluid velocity $u(x)$ on the interior $\Omega$ of $\Gamma$ by solving a set of corresponding boundary integral equations. 

The boundary integral approach represents the solution inside the domain as a boundary integral of some \emph{single} or \emph{double layer potential}. In electrostatics this is analogous to representing an electric field as an integral of a charge (single layer) or dipole (double layer) density on a surface. Following \cite{stokessolver, power, degenerate} we use the \emph{stresslet}\footnote{The stresslet is the symmetric part of the first moment of force for the Stokes equations.} as our kernel function, and the resulting boundary integral is
\begin{equation} \label{eq:stokesbie}
    {u}({x}) = \frac{1}{\pi}\int_{\Gamma} \frac{({x}-{y})\cdot{n}({y})}{\|{x}-{y}\|_2^4} ({x}-{y})\otimes ({x}-{y}){\mu}({y})\mathrm{d\Gamma}({y}) \qquad x\in\Omega.
\end{equation}
Plugging~\eqref{eq:stokesbie} into~\eqref{eq:boundarycondition} and taking the limit as $x$ approaches a point on $\Gamma$ results in 
\begin{equation}\label{eq:jump}
    {f}({x}) = -\frac{1}{2}{\mu}({x}) +
    \frac{1}{\pi}\int_{\Gamma} \frac{({x}-{y})\cdot{n}({y})}{\|{x}-{y}\|_2^4} ({x}-{y})\otimes ({x}-{y}){\mu}({y})\mathrm{d\Gamma}({y}) \qquad x\in\Gamma.
\end{equation}
The $-\frac{1}{2}\mu(x)$ is the result of a jump relation of the double layer potential as it approaches the boundary.

Solutions to~\eqref{eq:jump} exist but are not unique\textemdash the linear operator on the RHS has a one-dimensional null-space corresponding to the constraint that the net flux of fluid across $\Gamma$ be zero.
We eliminate the null-space by adding
\begin{equation}\label{eq:null-spaceop}
    \int_\Gamma {n}({x})\otimes{n}({y}){\mu}({y})\mathrm{d}y.
\end{equation}
to the RHS of~\eqref{eq:jump} \cite{power, stokessolver, sifuentesrandom}. \jrev{In simply connected domains, this results in a nonsingular operator and a unique solution $\mu(x)$.}

When the domain is multiply connected (i.e.,\ $\Gamma = \Gamma_0 \cup \Gamma_1 \cup ... \cup \Gamma_p$, see Figure~\ref{fig:boundary}), \eqref{eq:stokesbie} cannot represent flows resulting from singularities in the interiors of the holes in the domain (see \cite{degenerate} for further discussion on this). Following \cite{power} the \emph{Stokeslet} and \emph{rotlet} are defined as\footnote{The Stokeslet is the zeroth moment of force, and the rotlet is the antisymmetric part of the first moment of force for the Stokes equations.}
\begin{equation}
S_i\alpha_i \coloneqq \frac{1}{4\pi}\left(\ln \frac{1}{\|{x}-{c}_i\|_2}I +
\frac{
({x}-{c}_i)
\otimes 
({x}-{c}_i)
}{\|{x}-{c}_i\|_2^2}
\right){\alpha}_i, 
\end{equation}
\begin{equation}
    R_i \beta_i \coloneqq \frac{1}{4\pi\|{x}-{c}_i\|_2^2}({x}-{c}_i)^{\perp}\beta_i.
\end{equation}

\begin{figure}[ht!]
  \centering
  \includegraphics[width=0.4\textwidth]{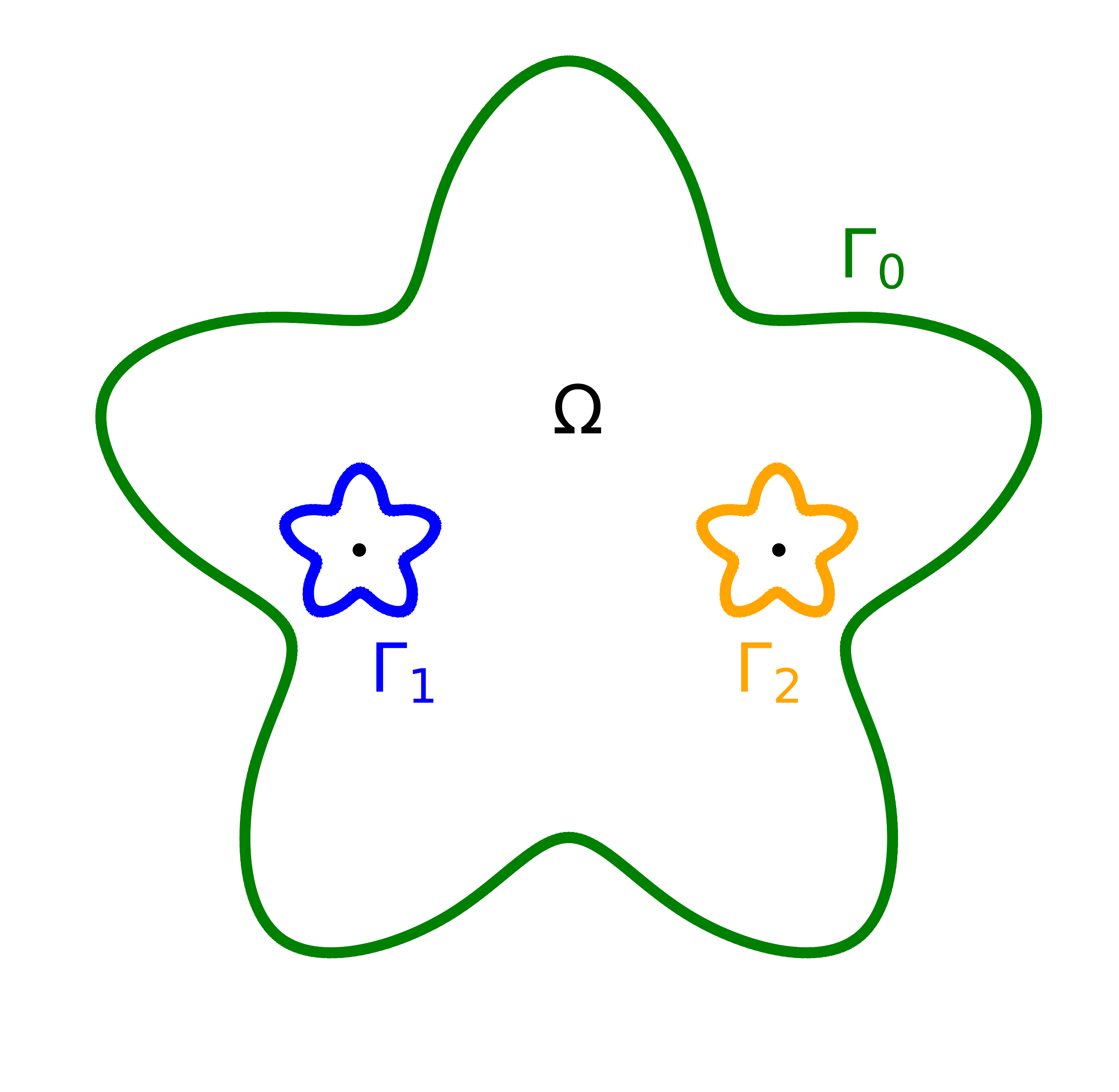}
  \caption{Multiply-connected boundary. The dots inside $\Gamma_1$ and $\Gamma_2$ represent points ($c_1$ and $c_2$ respectively) outside the domain $\Omega$ used in the construction of the Stokeslets and rotlets in Section~\ref{sec:stokeseq}.}\label{fig:boundary}
\end{figure}

The Stokeslet represents the free-space solution of~\eqref{eq:stokes} at $x$ due to a point force at $c_i$, and the rotlet represents the solution at $x$ due to a point torque at $c_i$. \jrev{In the above, $\alpha_i$ and $\beta_i$ represent the strengths of their respective point sources and torques. A Stokeslet of strength $\alpha_i$ located at $c_i$ generates a total force of $\alpha_i$ and a total torque of zero on $\Gamma_i$, and a rotlet of strength $\beta_i$ at $c_i$ generates a total force of zero and a total torque of $\beta_i$ on $\Gamma_i$ (see \cite{power} for further discussion).} Letting $c_1,\dots,c_p$ be \jrev{arbitrary} points on the interiors of $\Gamma_1,\dots,\Gamma_p$ respectively (so that $c_i\notin\Omega$, see dots in Figure~\ref{fig:boundary}), we \jrev{augment our system with the Stokeslets and rotlets, resulting in}
\begin{equation}\label{eq:completed}
     u =  D \mu + S\alpha + R\beta \coloneqq D \mu + \sum_{i=1}^p S_i\alpha_i + \sum_{i=1}^p R_i\beta_i.
\end{equation}\jrev{
\begin{equation}\label{eq:completedback}
    f = -\frac{1}{2} \mu + D \mu + N\mu+ S\alpha + R\beta 
\end{equation}
where $D$ represents the integral operator in~\eqref{eq:stokesbie} and $N$ represents the operator in~\eqref{eq:null-spaceop}. Now} any solution $u(x)$ of~\eqref{eq:stokes} can be represented by~\eqref{eq:completed}\jrev{, and we are tasked with finding $\mu, \alpha,\beta$ which satisfy \eqref{eq:completedback}}.

For conciseness we summarize the equations for $u$ and $f$ as
\begin{equation}\label{eq:forwardeq}
   \begin{bmatrix} u\end{bmatrix} = \begin{bmatrix}
    D & H
    \end{bmatrix}
    \begin{bmatrix}
    \mu \\ \lambda
    \end{bmatrix},
\end{equation}
\begin{equation}\label{eq:missingpsi}
   \begin{bmatrix} f\end{bmatrix} = \begin{bmatrix}
    -\frac{1}{2}I + D+N & H
    \end{bmatrix}
    \begin{bmatrix}
    \mu \\ \lambda
    \end{bmatrix}
\end{equation}
where
\begin{equation}
H \coloneqq  \begin{bmatrix} S & R \end{bmatrix} \qquad \lambda \coloneqq  \begin{bmatrix} \alpha \\ \beta \end{bmatrix}.
\end{equation}
\jrev{For multiply-connected domains, \eqref{eq:missingpsi} is underdetermined since}
% Unfortunately, \eqref{eq:missingpsi} is underdetermined as
\begin{equation}\label{eq:null-space}
    \left(-\frac{1}{2}I + D+N\right)\Psi = {0}
\end{equation}
for the \jrev{null-space}
% matrix $\Psi$ representing the null-space
\begin{equation}
    \Psi = 
    \begin{bmatrix}
    \psi_1^{(1)} & \psi_1^{(2)} & \psi_1^{(3)} & ...& \psi_p^{(1)} & \psi_p^{(2)} & \psi_p^{(3)} 
    \end{bmatrix}
\end{equation}
\begin{equation}
    \psi^{(1)}_i = 
    \begin{cases}
       {e_1}^T & {x}\in \Gamma_i
        \\    \\
        \lbrack
        0 \quad 0
        \rbrack^T & {x}\in \Gamma \setminus \Gamma_i
        
    \end{cases} \qquad    \psi^{(2)}_i = 
    \begin{cases}
      {e_2}^T & {x}\in \Gamma_i
        \\    \\
        \lbrack
        0 \quad 0
        \rbrack^T & {x}\in \Gamma \setminus \Gamma_i
        
    \end{cases}
   \end{equation}
 \begin{equation}
    \psi^{(3)}_i = 
        \begin{cases}
        \left({x}^\perp\right)^T & {x}\in \Gamma_i
        \\    \\
        \lbrack
        0 \quad 0
        \rbrack^T & {x}\in \Gamma \setminus \Gamma_i
    \end{cases}.
\end{equation}
We address this degeneracy by augmenting as \cite{power, stokessolver}
\begin{equation}\label{eq:backwardeq}
   \begin{bmatrix} f \\ 0 \end{bmatrix} = 
   \begin{bmatrix}
    -\frac{1}{2}I + D+N &H \\
    \Psi^T & -I
    \end{bmatrix}
    \begin{bmatrix}
    \mu \\ \lambda 
    \end{bmatrix}.
\end{equation}
Note that we are taking the adjoint of $\Psi$ in the \jrev{operator} sense, and so $\Psi^T\mu$ is an integral. 

The unique solution to \eqref{eq:backwardeq} may be computed and plugged into~\eqref{eq:forwardeq} to find the unique solution to the boundary value problem \eqref{eq:stokes}-\eqref{eq:boundarycondition}.  

\subsection{Skeletonization of a hierarchical submatrix}\label{sec:generalform}
The matrix in~\eqref{eq:backwardeq} is not the discretization of a kernel function satisfying~\eqref{eq:sep}, and hence the tools of Section~\ref{sec:fact} are not directly applicable. To generalize skeletonization to this setting, we factor the top-left block (which \emph{is} the discretization of a kernel satisfying~\eqref{eq:sep}) and apply the resulting interpolation matrices to $H$ and $\Psi^T$ in the following manner
\begin{equation} \label{eq:bigskel}
\begin{bmatrix}
    -\frac{1}{2}I + D +N & H \\
    \Psi^T & -I
    \end{bmatrix}
   \approx \begin{bmatrix}
   U^{-1} & 0 \\ 0 & I 
   \end{bmatrix}
\begin{bmatrix}
    X_D & UH \\
    \Psi^TV & -I
    \end{bmatrix}
   \begin{bmatrix}
   V^{-1} & 0 \\ 0 & I 
   \end{bmatrix}.
\end{equation}
Inverting both sides  of~\eqref{eq:bigskel} and plugging into~\eqref{eq:backwardeq} and subsequently~\eqref{eq:forwardeq} yields 
\begin{equation}\label{eq:solution}
    u \approx\hat{u}=
    \begin{bmatrix}
    D & H
    \end{bmatrix}
  \begin{bmatrix}
   V & 0 \\ 0 & I 
   \end{bmatrix}
\begin{bmatrix}
    X_D & UH \\
    \Psi^TV & -I
    \end{bmatrix}^{-1}
   \begin{bmatrix}
   U & 0 \\ 0 & I 
   \end{bmatrix}
   \begin{bmatrix}
       f \\ 0
   \end{bmatrix}.
\end{equation}

To perform the necessary linear solve, a na\"ive attempt might try and take advantage of the block diagonal structure of $X_D$ and perform block Gaussian elimination with Schur's complement 
\[M_{Schur} = -I - \Psi^TVX_D^{-1}UH.\]
Unfortunately, $ -\frac{1}{2}I + D +N$ is degenerate (see ~\eqref{eq:null-space}) \jrev{and this manifests as poor conditioning of the 
% \footnote{In our experiments we have not observed poor conditioning of an $X_{RR}$ block, but if this occurs, the ill-conditioned subblocks can be grouped with $X_{SS}$ in the bottom-right block of~\eqref{eq:partition}.} 
$X_{SS}$ subblock of $X_D$ (recall that we ensured well-conditioning of the $X_{R_iR_i}$ blocks during compression)}, thereby rendering a linear solve with $X_D$ infeasible. To address this, we consider an alternative partitioning of the skeletonized matrix
\begin{equation}\label{eq:partition}
\begin{bmatrix}
X_D & UH \\
\Psi^TV & -I
\end{bmatrix}
= 
\left[\begin{array}{ccc|cc}
X_{R_1R_1} & 0 & 0 &  & \\ 
0 & \ddots & 0 & 0 & \huge (UH)_{R, :}\\
0 & 0 & X_{R_nR_n} &  &  \\
\hline
 & 0 &  & X_{SS} & (UH)_{S,:}\\ 
 & (\Psi^TV)_{:,R} &  & (\Psi^TV)_{:,S} & -I 
\end{array}
\right].
\end{equation} 
In this case the diagonal blocks are nonsingular, and we may safely use block Gaussian elimination to solve the linear system. \jrev{The \secondrev{time} complexity of this new factorization is given by Corollary~\ref{cor:cost}.
\begin{corollary}\label{cor:cost}
Assuming $p\ll N$ and $k_l$ \secondrev{is bounded as in \eqref{eq:kl}}, the cost $t_f$ of computing a recursive skeletonization factorization of the augmented system~\eqref{eq:backwardeq} is 
\begin{equation}
t_f = 
        \begin{cases}
        \mathcal{O}(N) & d=1 \\
        \mathcal{O}(N^{3(1-1/d)}) & d>1
    \end{cases},
\end{equation}
 and the cost of then applying the factorization to a vector or performing a linear solve is
 \begin{equation}\label{eq:rsmcascost}
t_{a/s} = 
        \begin{cases}
        \mathcal{O}(N) & d=1 \\
        \mathcal{O}(N\log{N}) & d=2\\
        \mathcal{O}(N^{2(1-1/d)}) & d>2
    \end{cases}.
\end{equation}
\end{corollary}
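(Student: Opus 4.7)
The plan is to reduce the cost analysis to that of Theorem~\ref{thm:rscost} by viewing the augmented system in \eqref{eq:backwardeq} as the original $(-\tfrac{1}{2}I + D + N)$ integral operator decorated with only $3p$ additional rows and columns (three null-space functions per interior hole, plus the $-I$ and $H$ blocks). Because skeletonization is applied only to the top-left block, the dominant factorization cost is inherited directly from Theorem~\ref{thm:rscost}. What remains is to bound the overhead from (i) propagating the compression matrices $U$ and $V$ through $H$ and $\Psi^T$ as in \eqref{eq:bigskel}, and (ii) factoring the Schur complement that arises from the partitioning in \eqref{eq:partition}.

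For (i), each of $H$ and $\Psi^T$ has $3p$ non-trivial columns/rows, and applying $U$ or $V$ to a single vector costs $t_{a/s}$ from~\eqref{eq:rsascost}. Hence forming $UH$ and $\Psi^TV$ costs $\mathcal{O}(p\,t_{a/s})$, which under $p\ll N$ is strictly dominated by $t_f$ in every dimension. For (ii), after pivoting on the well-conditioned block diagonal $\mathrm{diag}(X_{R_1R_1},\ldots,X_{R_nR_n})$ (the conditioning guarantee is precisely what the partition in \eqref{eq:partition} is designed to exploit, as $X_{SS}$ is the degenerate block), the remaining Schur complement has size $|S|+3p$. Since $|S|$ at the root is on the order of $k_0$, the Schur complement factorization costs $\mathcal{O}((k_0+p)^3)$, which for $d=1$ is polylogarithmic and for $d>1$ matches the $\mathcal{O}(N^{3(1-1/d)})$ bound, giving the stated $t_f$.

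For the application and solve bound \eqref{eq:rsmcascost}, I would observe that applying the factored augmented operator (or its inverse) consists of: applying $U$ or $U^{-1}$ (cost $t_{a/s}$), a block back-substitution using the block-diagonal $X_{R,R}$ pivot (cost $\mathcal{O}(N)$ from the sum of the $X_{R_iR_i}$ block sizes), a dense solve against the Schur complement of size $|S|+3p$ (cost $\mathcal{O}((k_0+p)^2)$), and applying $V$ or $V^{-1}$ (cost $t_{a/s}$). The $p \ll N$ assumption guarantees the Schur complement work is absorbed into the asymptotic behavior of $t_{a/s}$ from Theorem~\ref{thm:rscost}, establishing \eqref{eq:rsmcascost} for $d=1$, $d=2$, and $d>2$.

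The main obstacle is the bookkeeping around the Schur complement: one must verify that the block-diagonal pivot $X_{R,R}$ is genuinely well-conditioned (this is what motivates the pivoted-ID based safeguards in Section~\ref{sec:fact}, allowing movement of indices from $R$ to $S$ as needed) and that the Schur complement itself is nonsingular, which follows from the fact that the augmented system \eqref{eq:backwardeq} has a unique solution. Once these structural points are settled, the cost accounting is a direct consequence of Theorem~\ref{thm:rscost} combined with the $p \ll N$ hypothesis.
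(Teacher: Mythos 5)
Your proposal is correct and takes essentially the same approach as the paper: reduce to Theorem~\ref{thm:rscost} by observing that skeletonization acts only on the top-left block and then bound the overhead---propagating $U$, $V$ through the $3p$ extra columns/rows and factoring the $(\mathcal{O}(k_0)+3p)$-sized Schur complement---as lower order under $p\ll N$. The paper's own proof is terser, omits the solve-cost accounting (which you supply correctly), and writes the Schur-complement cost as $\mathcal{O}(k_0^3p^3)+t_s\mathcal{O}(p)$, whereas your $\mathcal{O}((k_0+p)^3)$ is the cleaner and evidently intended bound. One small slip: the cost of back-substitution against $\diag(X_{R_1R_1},\ldots,X_{R_nR_n})$ scales like $\sum_i|R_i|^2$ rather than the sum of block sizes, but this is still subsumed by $t_{a/s}$ from Theorem~\ref{thm:rscost} and does not affect the conclusion.
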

\begin{proof}
The cost of the factorization is dominated by the cost of factoring the left block, the complexity of which is given by Theorem~\ref{thm:rscost}. The computation of the $\mathcal{O}(|S|)\times \mathcal{O}(|S|)$ Schur's complement for the lower right block has the lower order cost $\mathcal{O}(k_0^3p^3) + t_{s}\mathcal{O}(p).$
\end{proof}}

We also note that the updating scheme from Section~\ref{sec:factupdate} naturally generalizes to solving~\eqref{eq:solution} using the partitioning in~\eqref{eq:partition}. As before, we avoid unnecessarily recomputing and refactoring subblocks of $X_D$, $U$, and $V$ corresponding to boxes far from the perturbation. Further, interior holes can be added or deleted with ease when using an adaptive tree. 

\begin{remark}
\jrev{
If HIF-IE recompression techniques are used, then the complexity is the same as in Theorem~\ref{thm:hifie}. Similarly, because the dominant cost is that of factoring the upper left block of~\ref{eq:backwardeq} the cost of updating the factorization is given by Lemma~\ref{lem:updcost}. 
}
\end{remark}

\section{Numerical results}\label{sec:results}
We implemented the factorization routine in C++ using \jrev{BLAS and LAPACK} for matrix operations and OpenMP for parallelization. The code and experiments are available at \url{https://github.com/jpryan1/kern-interp}. All \secondrev{serial} 2D testing was conducted on a workstation with a 3.6 GHz quad-core Intel Core i7 CPU and 32 GB of RAM, and \secondrev{parallel and 3D testing were conducted on an Amazon Web Services compute-optimized c5d.18xlarge instance with 72 vCPUs and 154.6 GB of RAM. Tests that didn't involve scaling threads were conducted in serial.} All timing plots display average times over three runs of the relevant computation. 

\jrev{
For our 2D experiments, we used a non-uniform trapezoid quadrature for integration. The non-uniformity comes because we define the boundaries by placement of spline knots for cubic spline interpolation, and then choose points along the splines (uniformly in the spline parameter) as our integration nodes. For our 3D experiment, we triangulate the domain, using the triangle barycenters as integration nodes and triangle surface areas as weights. }
\jrev{
\subsection{Accuracy}
To verify the accuracy of the solver, we ran experiments to examine the error of the computed solution \secondrev{to the Stokes equations} when compared to (a) the solution computed via dense linear algebra and (b) the analytic solution, when available. \secondrev{For (a) we use the problem setup in Section~\ref{sec:optfluid}, and for (b) we use a boundary of concentric circles for which analytic solutions to the Stokes equations are available.} Results from these trials are visualized in Figure~\ref{fig:linalg_acc}. In both cases the overall accuracy of the solver tracks the expected accuracy based on the tolerance used for the interpolative decompositions\textemdash in additional experiments not reported here we observe this for the three dimensional Stokes flow problem as well.
}

\jrev{In all cases below, when the factorization is updated the root node maintains the same position and size and we perform dynamic tree maintenance as alluded to in Section~\ref{sec:factupdate}. As a result, we achieve \emph{exactly the same factorization} from updating as we do from recomputing the factorization. This has been tested and confirmed in all updating experiments. This behavior is in agreement with the initial work on updating rank-structured factorizations~\cite{mindenupdate}.} 

% Below we illustrate the program's performance with several example problems chosen to demonstrate the algorithm's strengths, and the benefits of parallelism and factorization updates in optimization settings.

\begin{figure}[ht]
  \centering
  \includegraphics[width=0.45\textwidth]{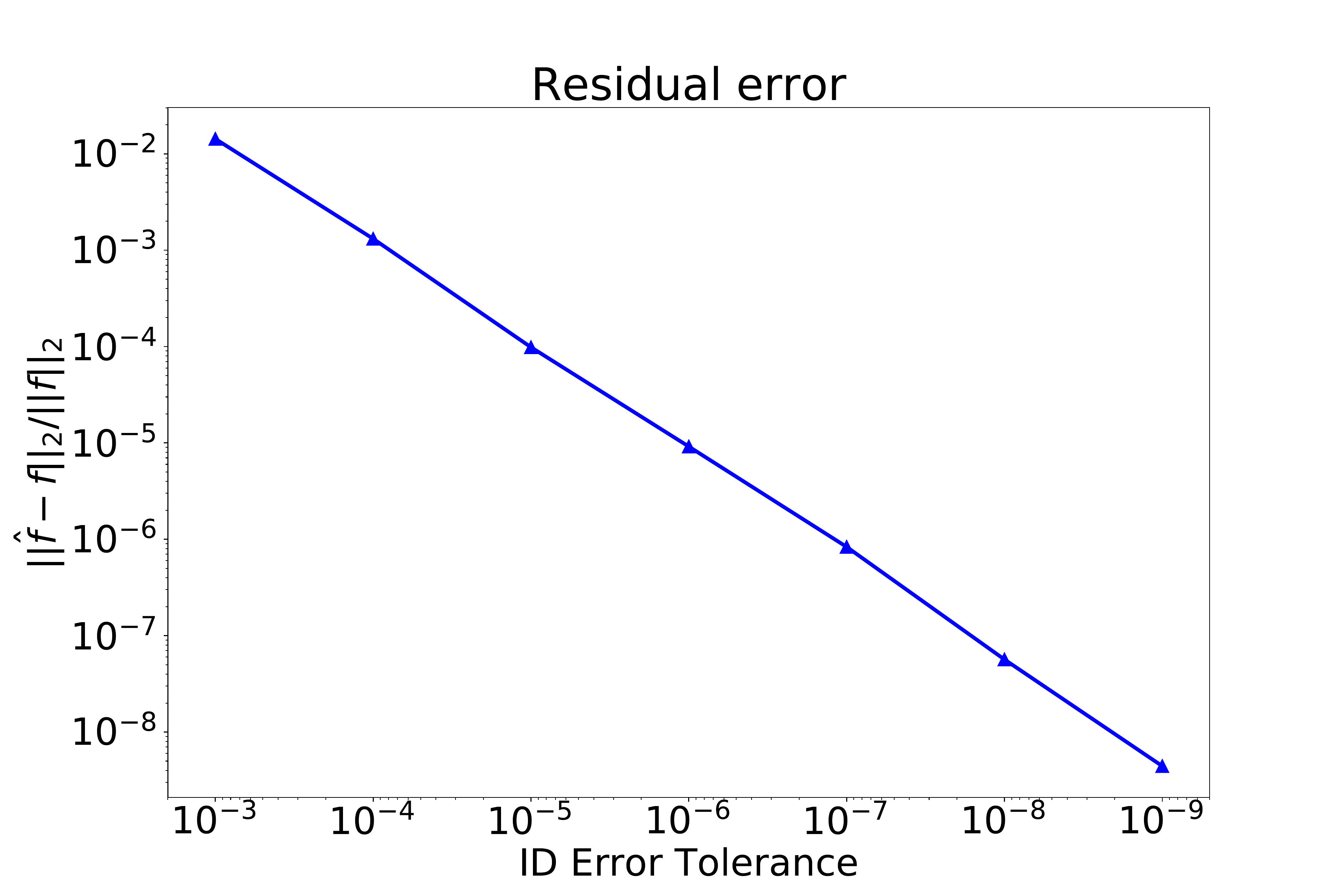}
    \includegraphics[width=0.45\textwidth]{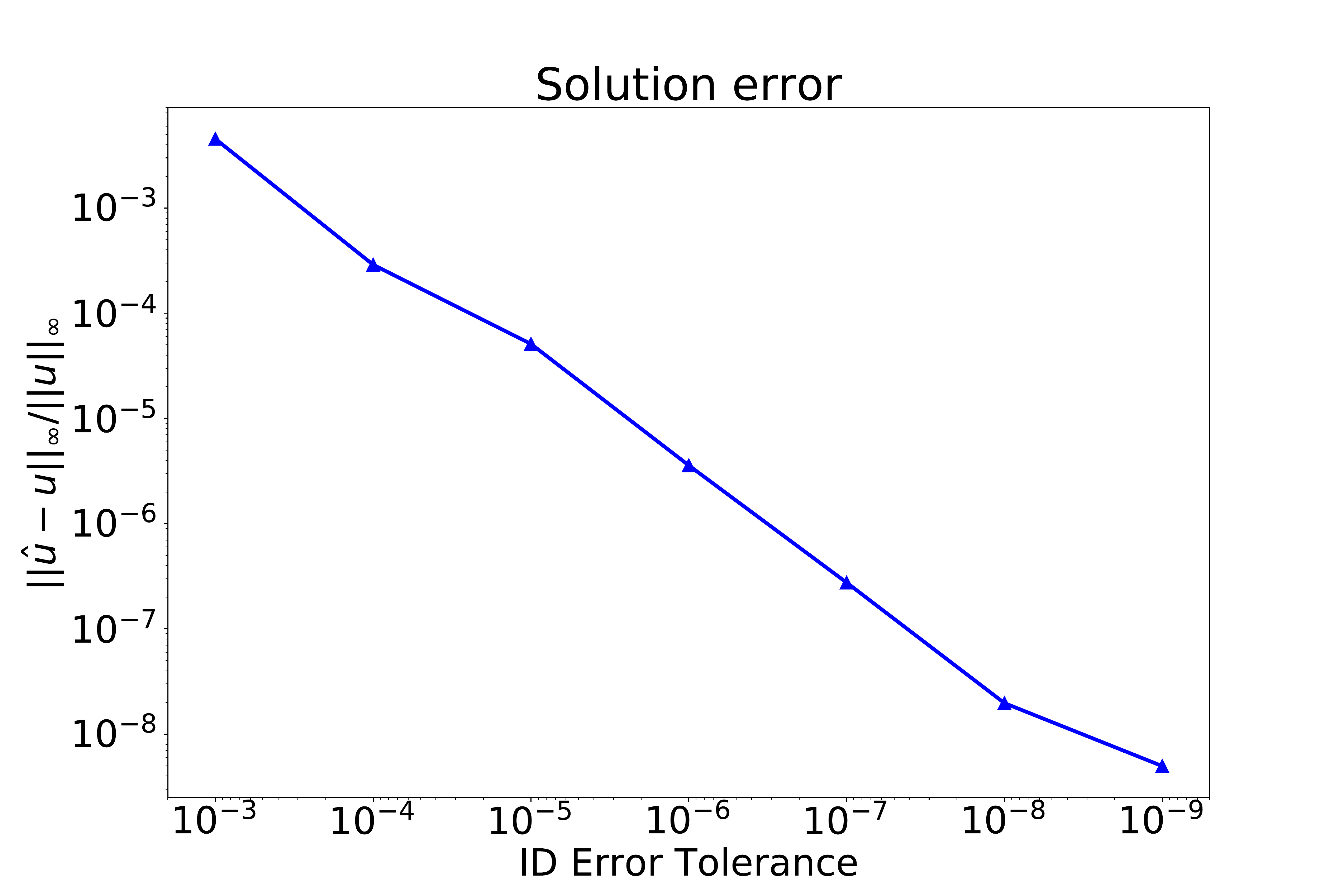}
  \caption{\jrev{\secondrev{Left: r}esidual between the true boundary data $f$ and $\hat{f}=K\hat{u},$ where $\hat{u}$ is the solution computed to $f = Ku$ by the rank-structured factorization with varying tolerance in the interpolative decomposition. This plot uses the problem setting from Figure~\ref{fig:demo3b}.} \secondrev{Right: relative error between the true solution and computed solution in a 2D problem where the analytic solution is available. In this experiment, the boundary used is two concentric circles with tangential boundary velocities, the number of boundary points is set at 32,768, and the solution is evaluated at points generated by laying down a 200x200 grid and selecting those inside the domain. We have omitted results from the analogous 3D experiment with concentric spheres as it provided similar results.}
  }\label{fig:linalg_acc}
\end{figure}

\begin{figure}[ht!]
  \centering
  \includegraphics[width=0.4\textwidth]{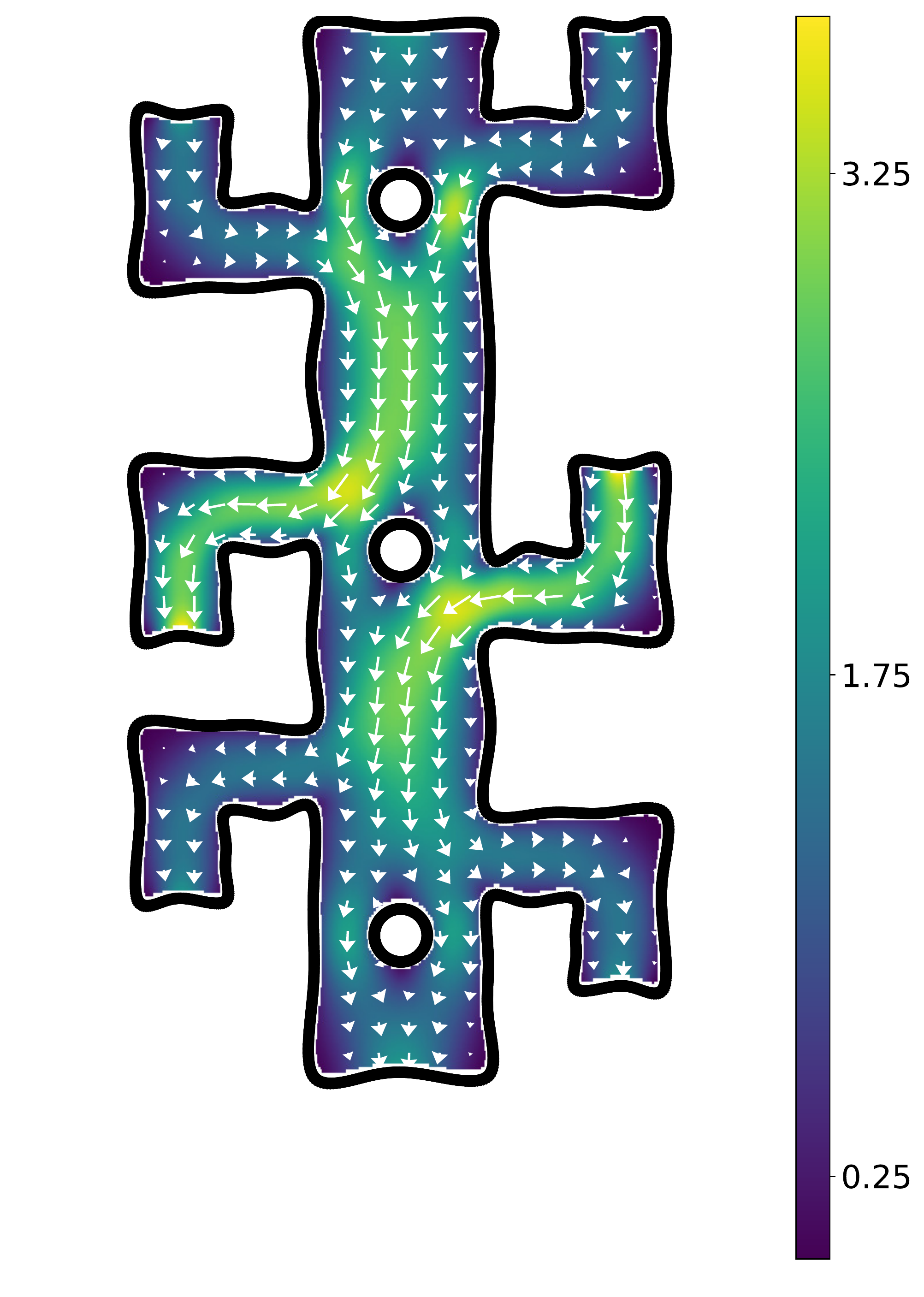} \includegraphics[width=0.9\textwidth]{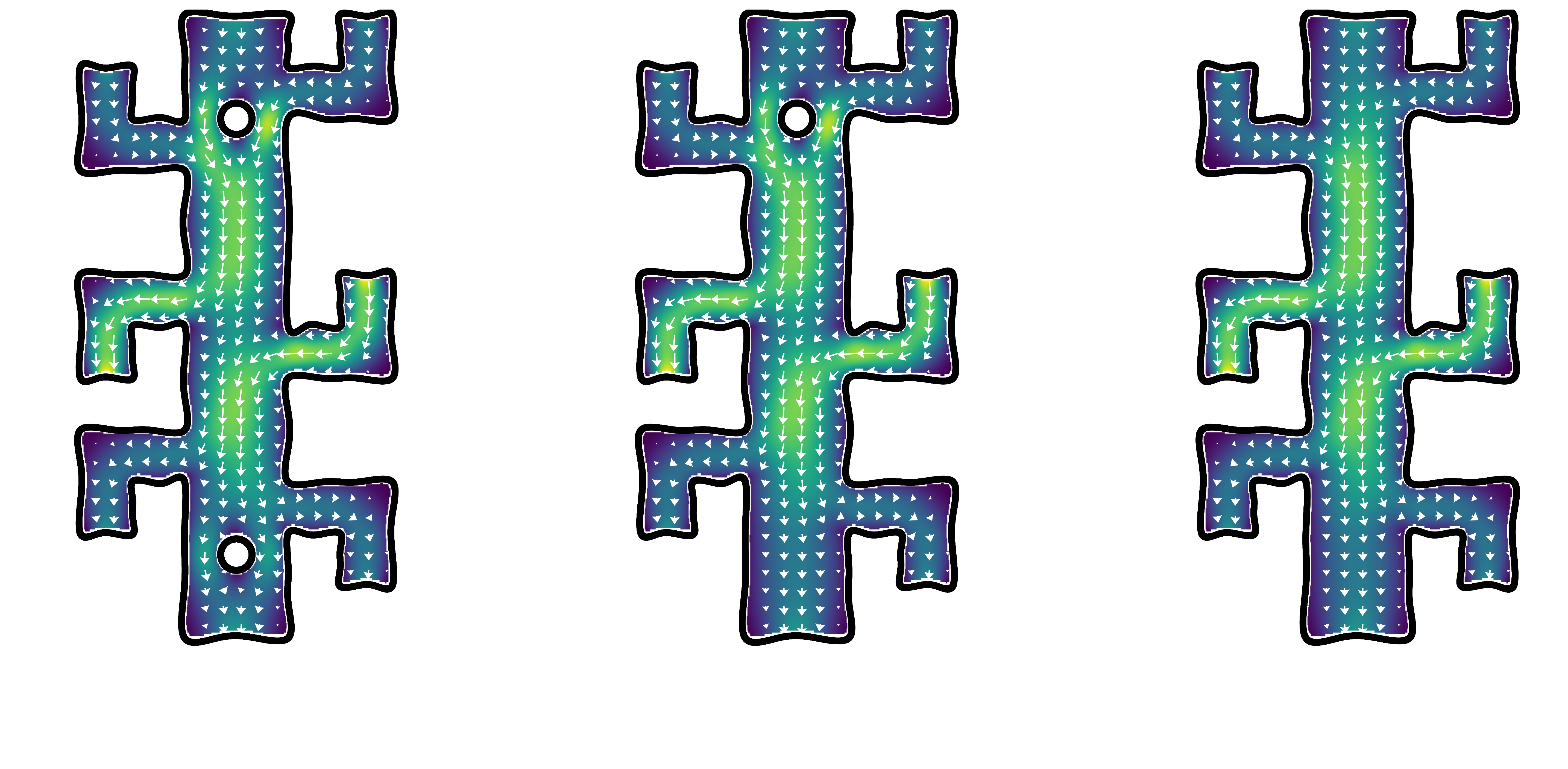}
  \caption{\jrev{Stokesian flow velocities in a multiply-connected domain. The solution is quickly recalculated after modifying the inner holes. For this figure we discretize the boundary using \secondrev{32,768} integration nodes, with 3/4 of them on the outer boundary and the rest evenly distributed across the interior holes, rounded as appropriate. The solution is evaluated throughout the domain, though we intentionally avoid evaluation near the boundaries as more sophisticated techniques are required \cite{qbx}.} }\label{fig:demo1}
\end{figure}
\begin{figure}[ht]
  \centering
  \includegraphics[scale=0.27]{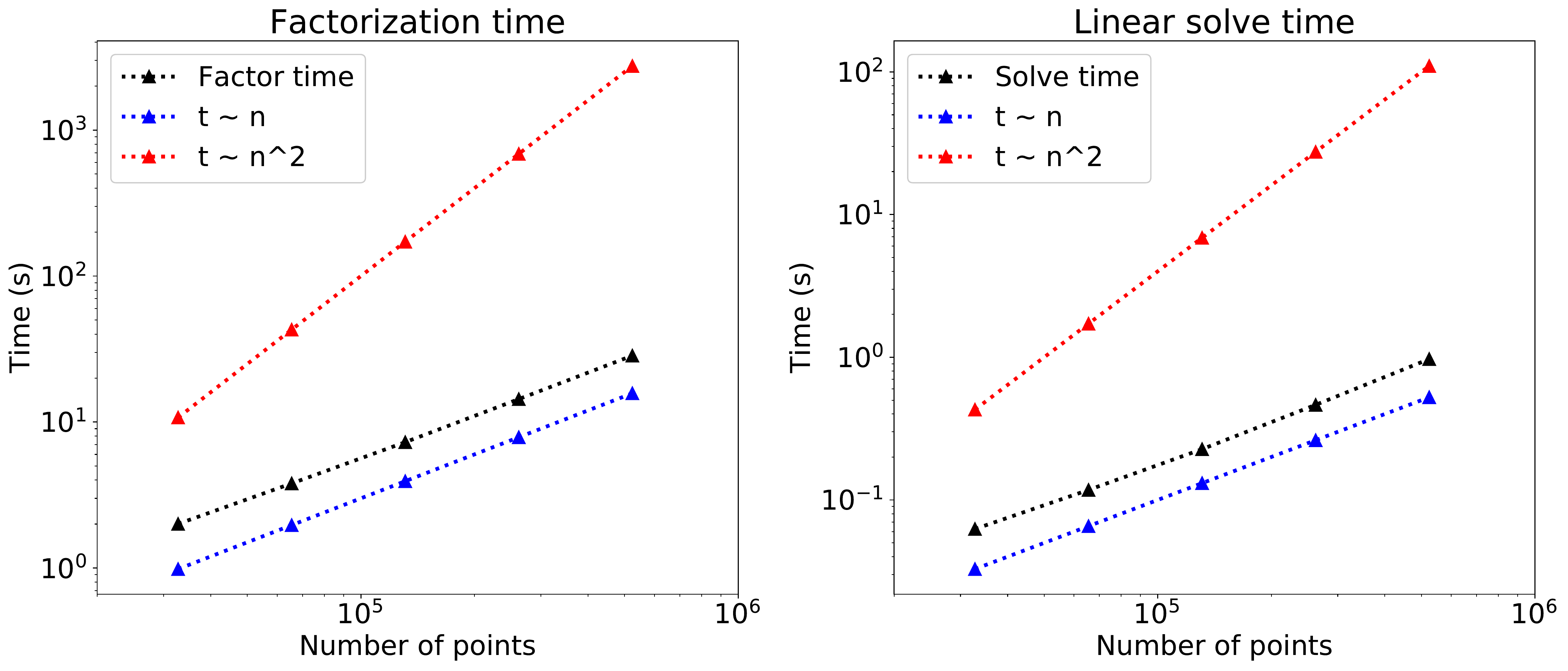}
%   (rounding is done when the given total number of nodes doesn't divide evenly)
  \caption{Both factoring and solving demonstrate linear scaling in the number of discretization points along the boundary, in accordance with Corollary~\ref{cor:cost}}\label{fig:scaling}
\end{figure}

\jrev{
\subsection{Stokes flow with addition/deletion of interior holes}\label{sec:ex1}
In our first experiment (visualized in Figure~\ref{fig:demo1}), we create a 2D outer boundary of a channel with intake and outtake pipes via cubic spline interpolation of 121 prescribed spline knots (note that this enforces smoothness of the boundary, and manifests as a slight curvature of the boundary in Figure~\ref{fig:demo1}). On the inside are three circular interior holes. For boundary conditions, we assign the following Dirichlet data:
\begin{itemize}
    \item Let $\mathcal{S}_i$ denote the set of points on the curve at the entrance/exit of the $i$th pipe. Let $m_i=\min_{x\in\mathcal{S}_i}{x_1}$ and $M_i=\max_{x\in\mathcal{S}_i}{x_1}$ be the minimum and maximum values of the horizontal components in these sets. Then we set
    \[f(x)=\left(0, \cos{\left(2\pi \frac{x-m_i}{M_i}\right)}-1\right) \]
    except for the pipes in the middle, where we have \[f(x)=\left(0, 2\cos{\left(2\pi \frac{x-m_i}{M_i}\right)}-2\right). \]
    (Notice in Fig.~\ref{fig:demo1} that flow is stronger in the middle pipes.)
    \item Everywhere else we set $f(x)=0$.
\end{itemize}}

\jrev{The above Dirichlet data automatically satisfies the consistency condition coming from incompressibility. Figure~\ref{fig:scaling} shows that factoring and solving the associated linear system scale linearly with the number of quadrature nodes on the boundary, in accordance with Corollary~\ref{cor:cost}. Note in Figures~\ref{fig:scaling} and~\ref{fig:ex1speedup} $N$ points with $p$ boundaries in a Stokes flow experiment result in a matrix of size $2N+3p$ (see~\eqref{eq:backwardeq}).}

\begin{figure}[ht]
  \centering
  \includegraphics[scale=0.23]{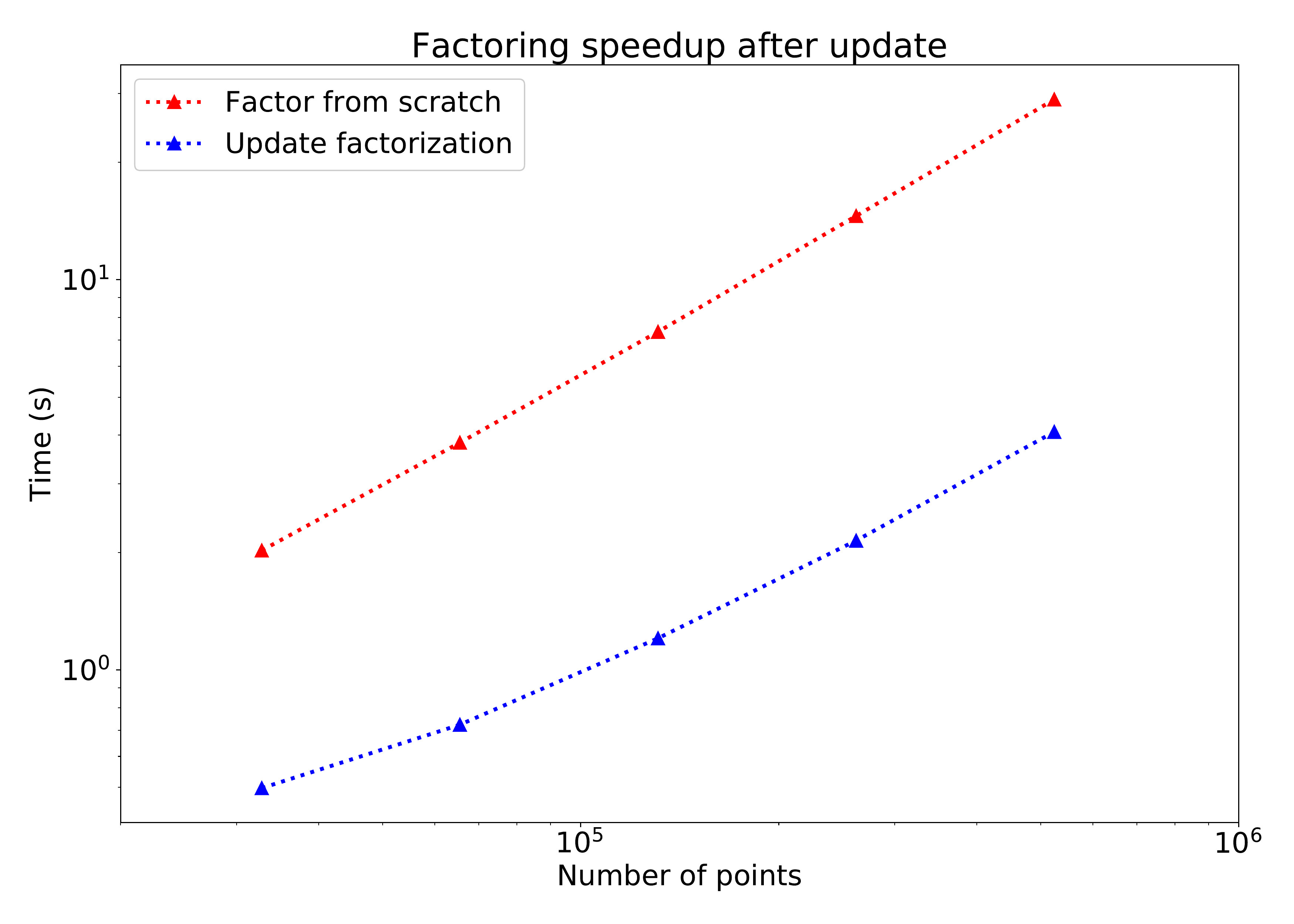}
  \caption{The updating scheme of Section~\ref{sec:factupdate} results in a 4-8x speedup in the factorization of the linear system \secondrev{described in Section~\ref{sec:ex1}}. \secondrev{Plotted is the speedup following insertion of an interior hole achieved by updating the factorization versus recomputing the factorization from scratch.} } \label{fig:ex1speedup}
\end{figure}

\jrev{
After the initial factorization, \secondrev{we modify the boundary by either adding or deleting interior holes}, and the factorization is updated based on these perturbations. Each update corresponds to \secondrev{modifying} $N/12$ points \secondrev{on} the boundary, where $N$ is the initial total number of points.} In Figure~\ref{fig:ex1speedup} we see that updating the factorization based on the technique in Section~\ref{sec:factupdate} results in a substantial speedup over recomputing the factorization from scratch. \secondrev{T}he cost of computing a linear solve in the perturbed geometry is effectively the same regardless of whether that factorization is from scratch or from an update.

\jrev{Parallelizing as described in Section~\ref{sec:parallel} yields notable speedups. Figure~\ref{fig:parplot} shows that a \secondrev{4-5x} and 2x speedup is achieved in time and linear solve time respectively by using \secondrev{eight} threads. Using more than \secondrev{eight} threads does not appear to have a sizeable impact, \secondrev{ostensibly due to the ratio of work near the top of the tree and the concentration of points within 4 and 8 tree nodes at the second and third levels of the tree. In Section~\ref{sec:cowupd} we see greater gains from parallelizing a 3D problem where points are more spread throughout tree nodes at the third level of the tree.} }

\begin{figure}[ht]
  \centering
  \includegraphics[scale=0.27]{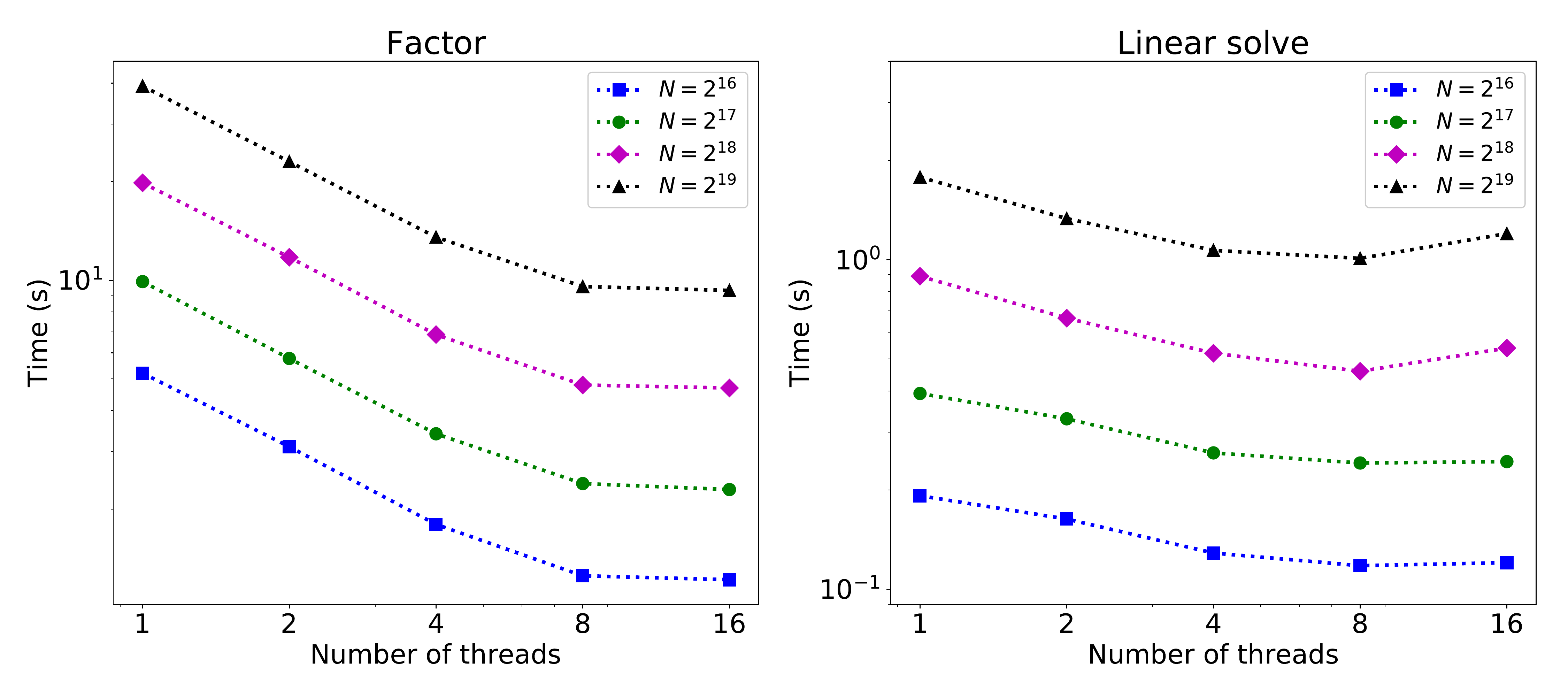}
  \caption{\jrev{Parallelizing the initial factorization and linear solve for \secondrev{the system described in Section~\ref{sec:ex1}} results in notable speedups up to \secondrev{eight} threads, with not much improvement seen by then increasing to \secondrev{sixteen} threads. The speedup using \secondrev{eight} threads is a factor of about \secondrev{4-5} for the factorization, and around 2 for the solve.}}\label{fig:parplot}
\end{figure}

\subsection{Stokes flow through a channel with many moving interior holes}
In our second experiment (zoomed-in visualization in Figure~\ref{fig:demo2}), we simulate Stokes flow in a channel with ten interior circular holes. The outer boundary is a long rectangle with rounded corners, and the boundary conditions are again Dirichlet data. \jrev{We prescribe horizontal velocities on the left and right walls whose magnitudes vary as $\cos{(x)}$, just as in the previous experiment. On the top and bottom walls and on the interior circles, we prescribe zero velocities (no slip).}

\begin{figure}[ht]
  \centering
  \includegraphics[scale=0.35]{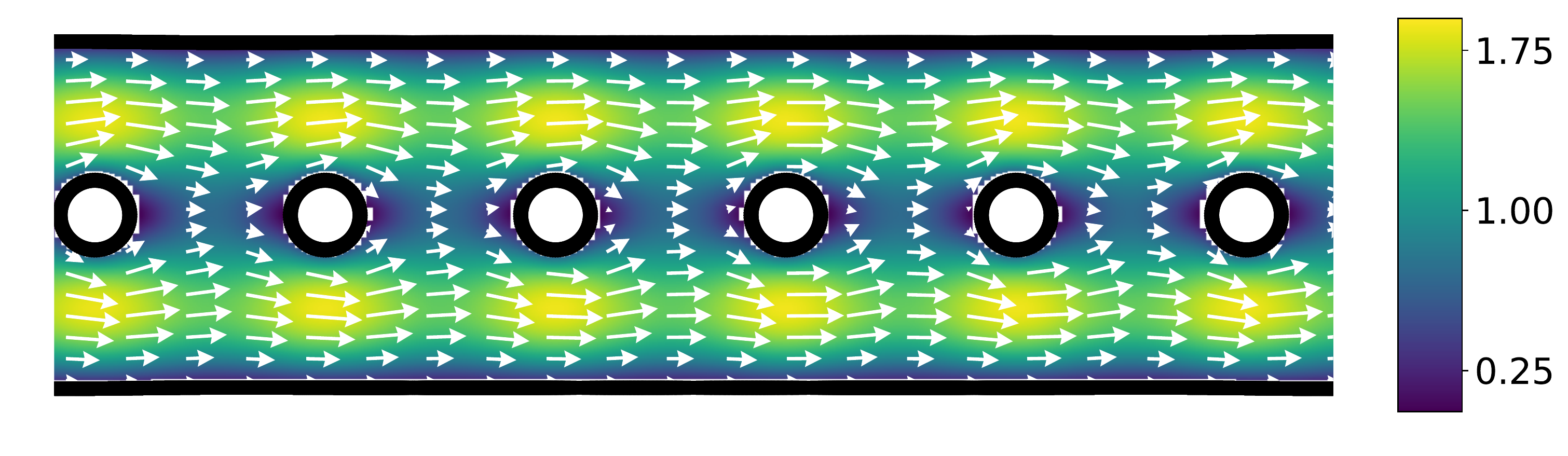}
  \includegraphics[scale=0.2]{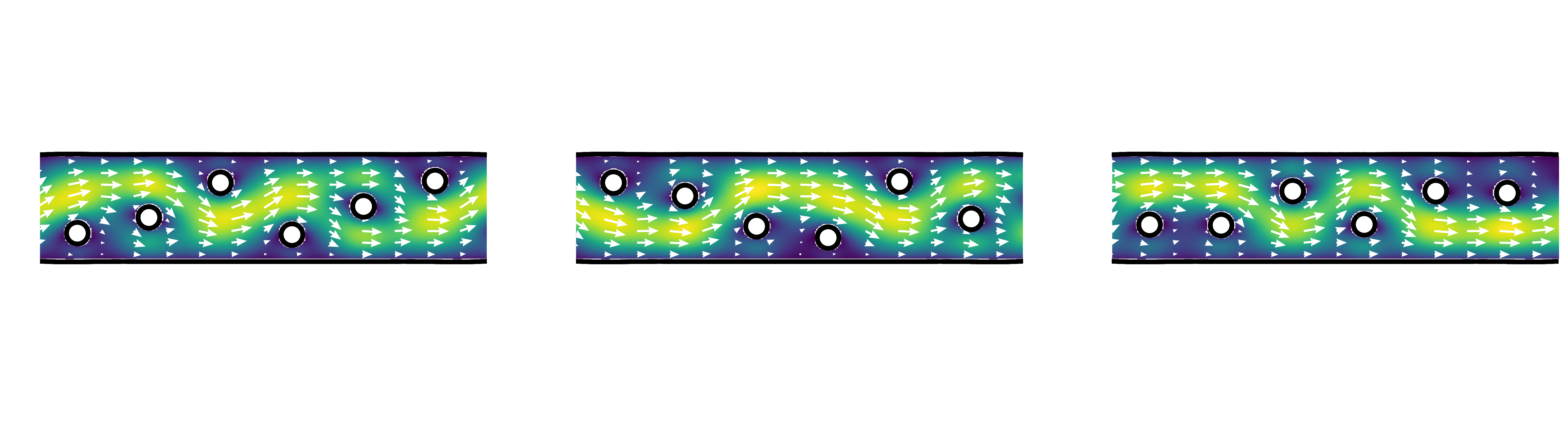}
   \caption{Stokes flow solution for a channel-like outer boundary with interior holes. We initially construct a factorization for the top geometry, and use the updating scheme to develop factorizations for the bottom geometries. We discretize the boundary using \secondrev{32,768} integration nodes, with 3/4 of the integration nodes on the outer boundary and 1/32 on each of the eight interior boundaries, rounded as appropriate. The above image is zoomed in to show the flow in detail.}\label{fig:demo2}
 \end{figure}

To illustrate the power of maintaining the same factorization structure across updates, we perform one hundred changes to the positions of the interior holes. \jrev{Each of these changes corresponds to the repositioning of $N/32$ points. Importantly, we are assuming the solution is desired for each of the 100 configurations and are not using a sequence of updates to get to a single large update---in that case we would simply do a single update with all the necessary changes.} Table~\ref{table:times} shows that, after initial factorization, subsequent factorizations (for different problem geometries) require significantly less time to compute by using the updating scheme. Furthermore, the time required to update the factorization remains relatively stable.

\jrev{
\begin{table}[ht]
\caption{\label{table:times}Cost of initial factorization and subsequent updates, along with statistics for factorization update times. For this experiment, we discretize the boundary using 131,072 integration nodes.}
\centering
\begin{tabular}{|c|c|c|c| c|} 
 \hline
Initial fact.  & 1st update  & 100th update  &  Update time $\mu$ &  Update time $\sigma$ \\
\hline
7.40 s & 0.86 s & 0.85 s & 0.86 s & 0.01 s\\
 \hline
\end{tabular}
\end{table}
}

% \jrev{
% \begin{table}[ht]
% \caption{\label{table:errors}Relative difference in $[\mu, \lambda]$ for updating the factorization versus refactoring from scratch. For this experiment, we discretize the boundary using 131,072 integration nodes.}
% \centering
% \begin{tabular}{|c|c|c| c|} 
%  \hline
% 1st update  & 100th update  &  Median &  Maximum \\
% \hline
% $3.83\times 10^{-8}$
% & $8.49\times 10^{-6}$
% & $1.08\times 10^{-8}$ 
% & $1.28\times 10^{-5}$\\
%  \hline
% \end{tabular}
% \end{table}
% }

As described at the end of Section~\ref{sec:factupdate}, the speedup due to parallel compression of boxes on each level is mostly seen in the initial factorization, with the factorization updates benefiting relatively less. One way to maintain efficient processor usage is to compute the initial factorization in parallel as described in Section~\ref{sec:parallel}, and then compute distinct updates each on independent processors. This is particularly beneficial for problems in which we know a priori a large number of related geometries in which we would like to know the solution. An example of this is exploring a solution landscape locally in an optimization problem. We explore this setting in the following experiments.

\subsection{Shape optimization}\label{sec:optex}
\subsubsection{Optimizing heat source/sink placement}\label{sec:ex3a}
In this experiment (visualized in Figure~\ref{fig:demo3a}) we calculate steady-state temperature distributions given Neumann boundary conditions, i.e., we solve Laplace's equation
\[\Delta u({x}) = 0 \qquad {x}\in\Omega \]
for many geometries. Our goal will be to maximize an objective function of the solution to be discussed later. The setup is similar to the Stokes problems, with the following exceptions:
\begin{itemize}
\item Equation~\eqref{eq:forwardeq} becomes 
    \[u = S\mu\]
    where $S$ is the \emph{single layer potential} for the 2D Laplace problem defined as 
    \[S\mu \coloneqq -\int_{\Gamma} \frac{1}{2\pi}
    \log{\left|{x}-{y}\right|}{\mu}({y})\mathrm{d}y .\]
\item Equation~\eqref{eq:backwardeq} becomes 
    \[ f=\left(-\frac{1}{2}I - D + N\right)\mu. \]
Notably, the integral operator is full rank in this setting, and we do not have to augment the system. 
\item The double layer potential is now
    \[D\mu \coloneqq -\int_{\Gamma} \frac{1}{2\pi}\frac{({x}-{y})\cdot {n}({x})}{({x}-{y})\cdot ({x}-{y})}{\mu}({y})\mathrm{d}y.\]
\end{itemize}

\begin{figure}[ht]
  \centering
  \includegraphics[width=0.5\textwidth]{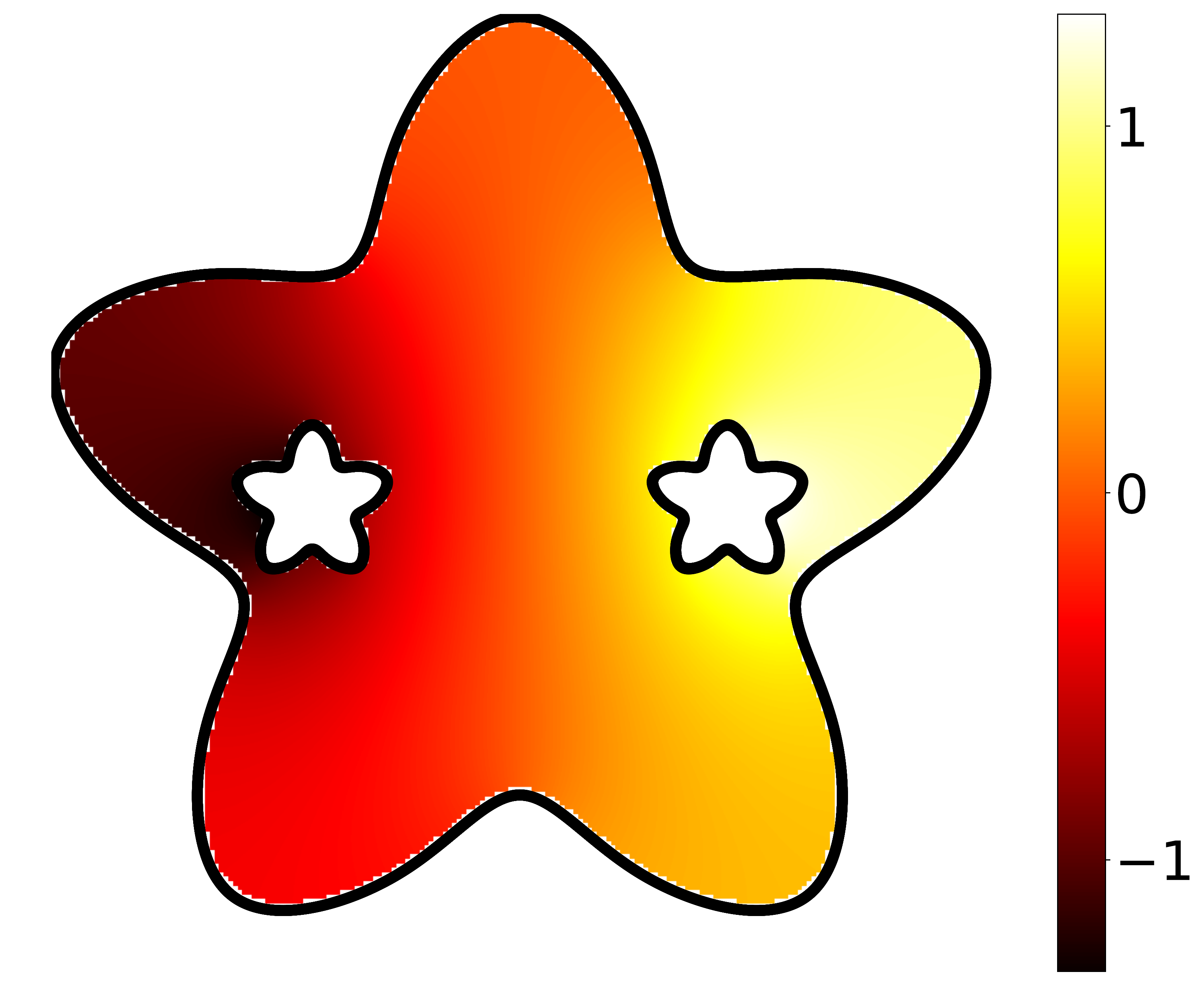}
   \caption{Steady state heat distribution given flux conditions on the boundaries. We discretize the boundary using \secondrev{16,384} integration nodes, with 2/3 of the integration nodes on the outer boundary and 1/6 on each of the two interior boundaries, rounded as appropriate. Visualized is the geometry with $\theta_1=0$ and $\theta_2=\pi$.}\label{fig:demo3a}
 \end{figure}

We create a starfish outer boundary via cubic spline interpolation with 20 prescribed spline knots. For the interior holes, we use starfishes that are 16\% the size of the outer boundary. \jrev{For the Neumann data we set
\begin{equation}\label{eq:neumanndata}
    \frac{\partial u}{\partial n_x} =
    \begin{cases} 
    0 & x\in \Gamma_0\\
    1 & x\in \Gamma_1\\
     -1 & x\in \Gamma_2
     \end{cases}
\end{equation}
where the LHS is the normal derivative at the point $x$ on the boundary. 
}
We consider positions of the interior holes that lie along a path which is equal to a scaled-down version of the outer boundary. As each hole moves along this path, it is parametrized by the periodic variable $\theta_i\in[0,2\pi)$. To prevent the holes from overlapping, we require that the distance between $\theta_1$ and $\theta_2$ (modulo $2\pi$) be greater than $\pi/4$. 

For the objective function, we estimate the derivative of the solution in the x-direction at the center of the domain, and try to find $(\theta_1, \theta_2)$ which maximizes its value (see Figure~\ref{fig:landscape} for a visualization of this function). We choose updates to $(\theta_1,\theta_2)$ via gradient descent, where the gradient of the objective function is estimated by the fourth-order centered finite difference approximation. In other settings, more sophisticated techniques of analytically or numerically computing the gradients may be more appropriate, see \cite{adjoint, adjointaero}. We choose the length of our descent step via a backtracking line-search. Hence, at each optimization step, we require the solution at a minimum of eight \jrev{(fourth-order finite difference for two parameters)} distinct but closely related geometries.

\begin{figure}[ht]
  \centering
  \includegraphics[width=0.7\textwidth]{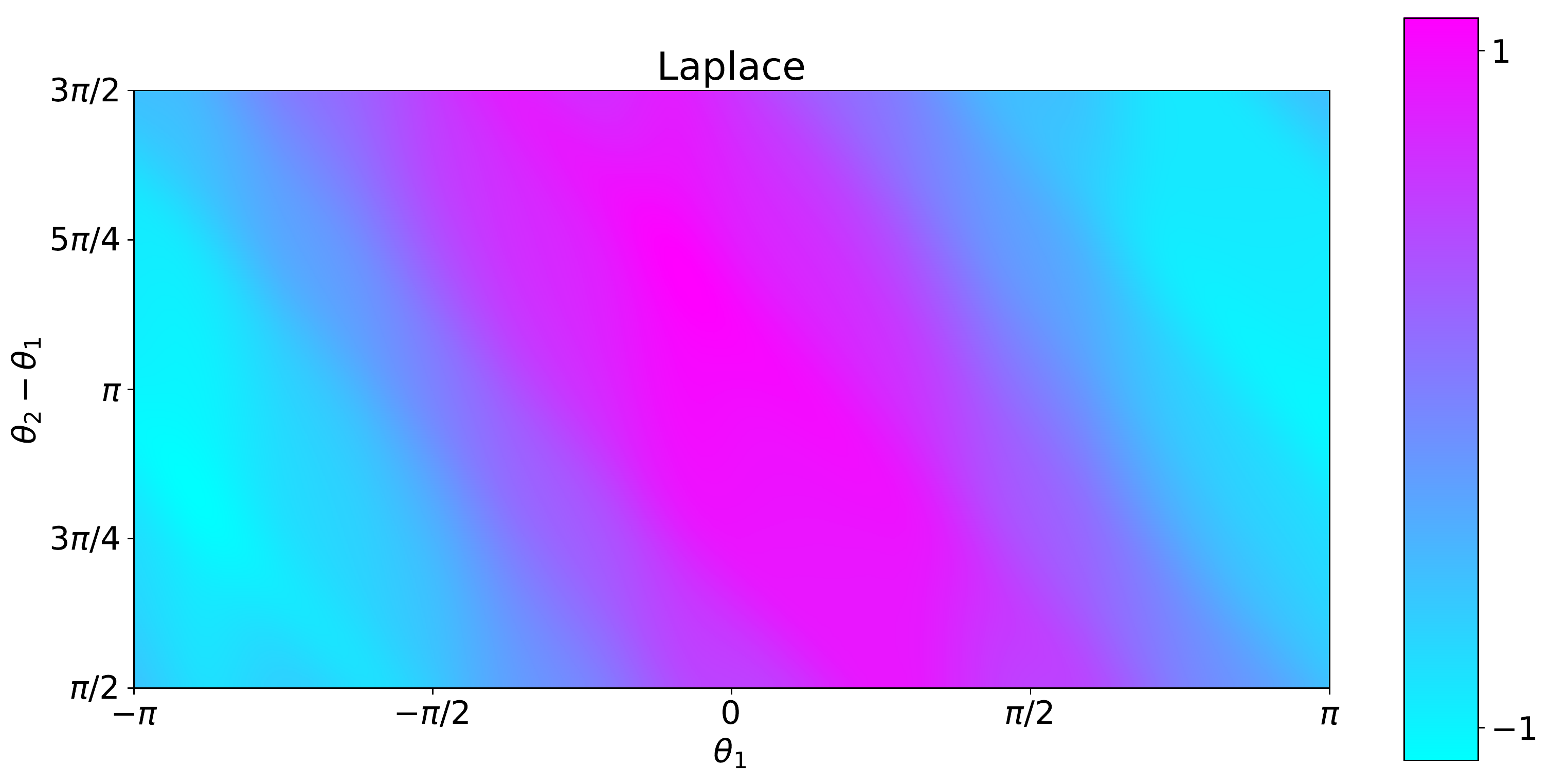}\\
  \hspace{3.3mm}\includegraphics[width=0.715\textwidth]{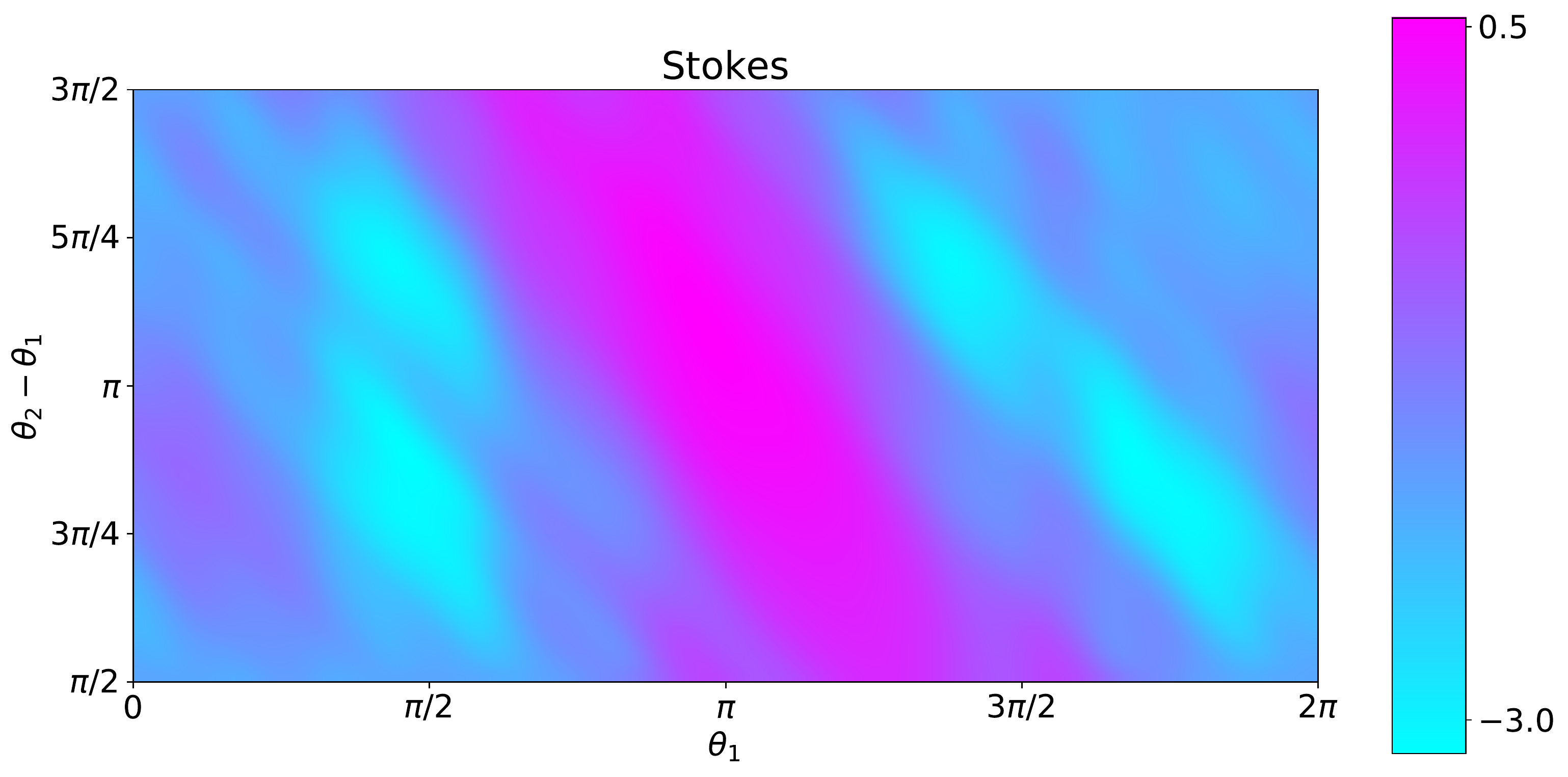}
  \caption{Objective function value for 2,500 inner hole parameter pairs $(\theta_1, \theta_2)$. Note that while each plot seems to exhibit a good local maximum near the center, the landscape for Stokes flow is more complex, containing many local maxima. For these plots, we discretize the boundary using \secondrev{16,384} integration nodes. }\label{fig:landscape}
\end{figure}

As discussed in Section~\ref{sec:factupdate}, the factorization updates are less parallelizeable than the initial factorization. \jrev{Although every level (besides the root) will contain multiple boxes in need of recomputation, the number of such boxes will be considerably smaller than in the initial factorization, and using multiple processors will result in a relatively higher synchronization cost.} Instead of using multiple processors \secondrev{for each} factorization update, we can take advantage of them by computing the \jrev{gradient approximation} updates in parallel, \secondrev{using fewer threads per update (see Table~\ref{table:schemes})}. As a result, the performance gain from using the updating routine in an optimization setting is even greater than in Table~\ref{table:times} on a per-step measure (see Figure~\ref{fig:parallel}).

\begin{figure}[ht]
  \centering
  \includegraphics[scale=0.25]{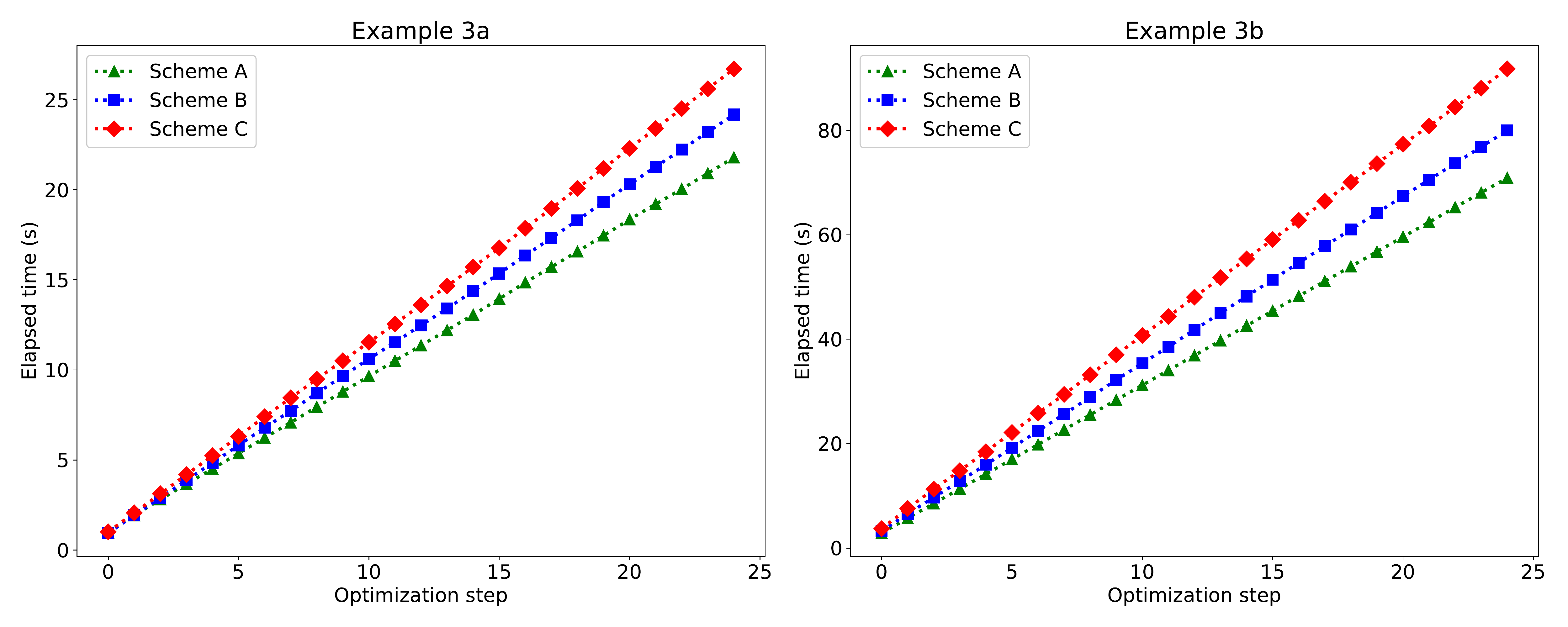}
  \caption{Elapsed times of the optimization experiment described in Section~\ref{sec:ex3a} when using three different schemes for allocating work amongst four processors. In Scheme A, the related factorizations in the finite difference gradient approximation are each computed by a single processor. In Scheme B, each factorization is computed by two processors working in parallel, hence at most two updates are computed simultaneously. In Scheme C, each factorization is computed by four processors working in parallel, hence no updates are computed simultaneously. \secondrev{The parallelization schemes are summarized in Table~\ref{table:schemes}.} We discretize the boundary using \secondrev{16,384} integration nodes.}\label{fig:parallel}
\end{figure}

% \begin{table}[ht]
% \secondrev{
% \caption{\label{table:schemes}Summary of the parallelization schemes used in the optimization experiments; the total number of processors is 4.}}
% \centering
% \begin{tabular}{lcc} 
% &Proc. per factorization update&Simultaneous updates \\
% \hline
% Scheme A &1&4\\
% Scheme B &2&2\\
% Scheme C &4&1
% \end{tabular}
% \end{table}

\begin{table}[ht]
\secondrev{
\caption{\label{table:schemes}Parallelization schemes used in the optimization experiments with 4 processors.}}
\centering
\begin{tabular}{lccc} 
Scheme& A & B & C \\
\hline
Processors per update &1&2&4\\
Simultaneous updates &4&2&1\\
\end{tabular}
\end{table}

Initializing the interior holes to be relatively far from their seemingly optimal positions, we see (Figure~\ref{fig:convergence}, left) rapid convergence to a local maximum. The value of $(\theta_1,\theta_2)$ at the computed maximum matches that of the maximum seen in the center of Figure~\ref{fig:landscape}, top, and aligns with the intuitive expectation that the horizontal temperature gradient at the center of the domain is maximized by placing the heat source and sink on the left and right of the center. 

\begin{figure}[ht]
  \centering
  \includegraphics[scale=0.27]{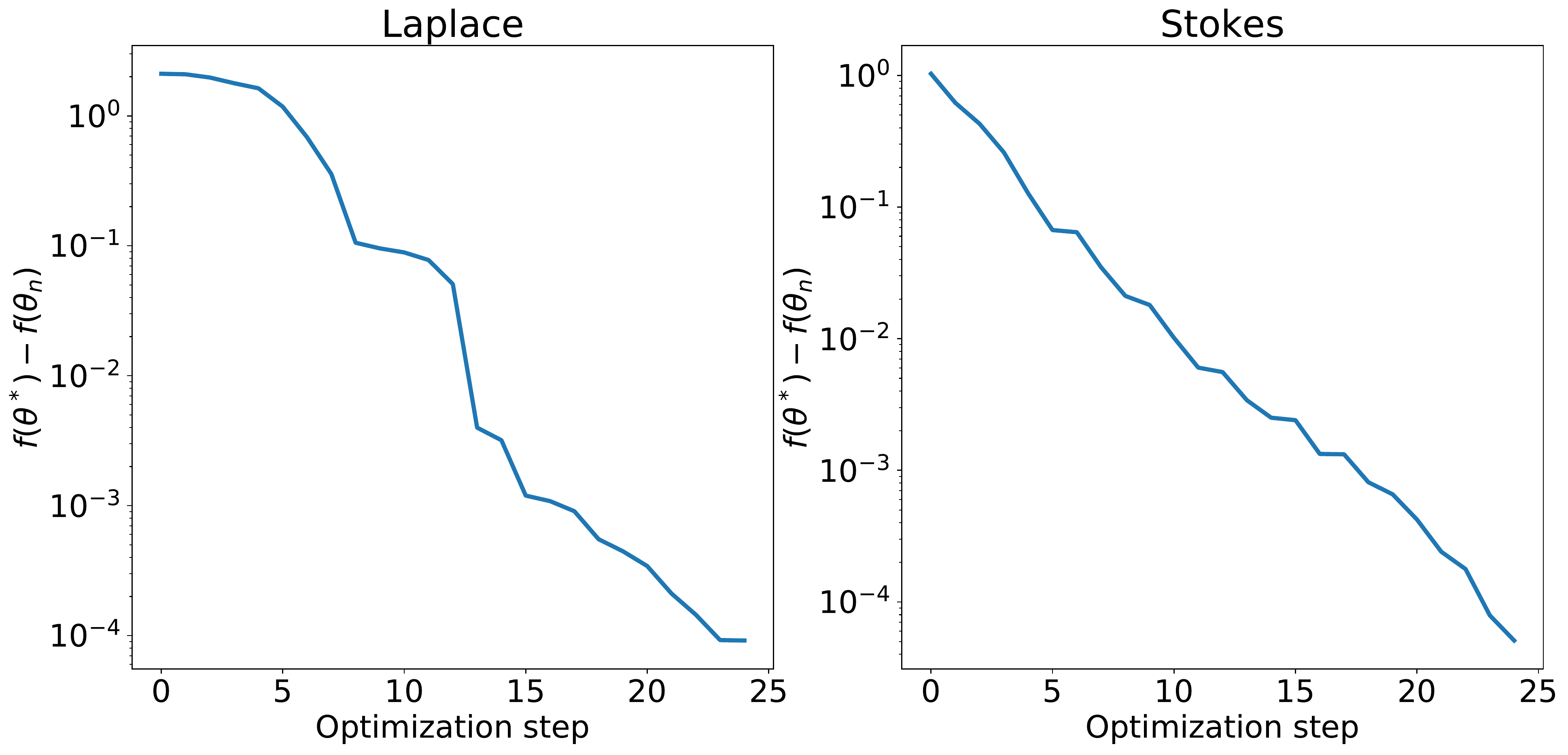}
  \caption{Convergence to optimal configurations. The parameters found to maximize the objective functions match those predicted by Figure~\ref{fig:landscape}. In the above convergence plots, the true optimal value $f(\theta^*)$ is approximated by allowing the optimization to run for a long time from an initial point based on Figure~\ref{fig:landscape}. As in Figure~\ref{fig:parallel}, we discretize the boundary using \secondrev{16,384} integration nodes. }\label{fig:convergence}
\end{figure}

\subsubsection{Optimizing fluid source/sink placement}\label{sec:optfluid}
In this experiment (visualized in Figure~\ref{fig:demo3b}), we return to the Stokes equations and reuse the boundary from the previous experiment. \jrev{We use the following Dirichlet data as boundary conditions
\begin{equation}\label{eq:ex3bdata}
f(x) = \begin{cases}
(1,0) & x\in \Gamma_0 \\
n_x & x\in\Gamma_1\\
-n_x & x\in \Gamma_2
\end{cases}
\end{equation}
where $n_x$ is the unit normal vector at the point $x$ on the boundary. 
}
As our objective, we aim to maximize the horizontal flow to the left at the center of the domain. This choice yields interesting effects, as illustrated in Figure~\ref{fig:landscape}, bottom\textemdash besides acting as fluid sources and sinks, the interior holes serve to obstruct the horizontal flow coming from the outer boundary. As a result, we see greater complexity in the dependence of the objective function on $(\theta_1,\theta_2)$. As in the previous experiment, we see performance gains by efficient allocation of work among the four processors (Figure~\ref{fig:parallel}, right) and rapid convergence to a local maximum (Figure~\ref{fig:convergence}, right).

 \begin{figure}[t]
  \centering
  \includegraphics[width=0.5\textwidth]{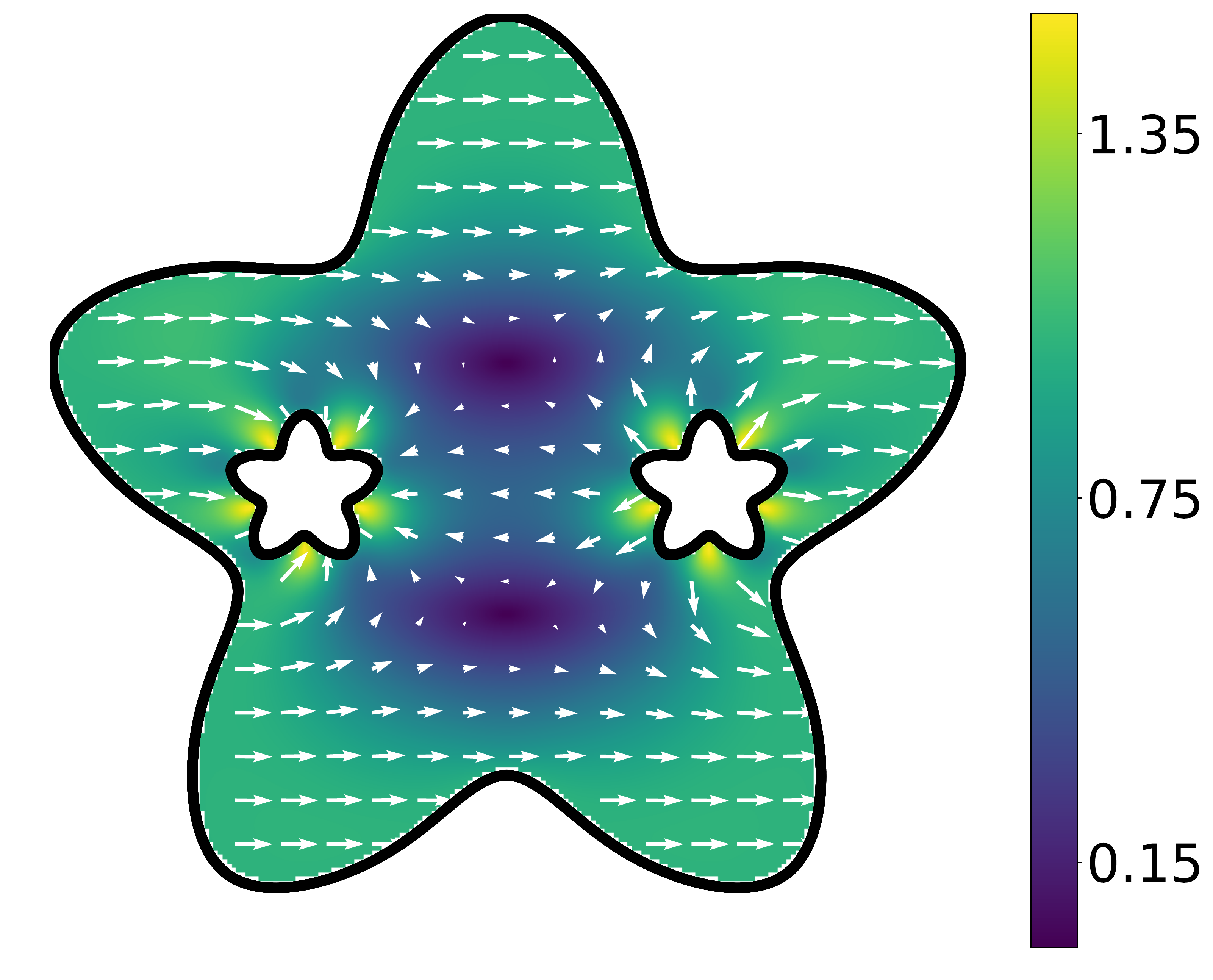}
   \caption{Stokes flow inside the domain given prescribed velocities along the boundary. We discretize the boundary using \secondrev{16,384} integration nodes, with 2/3 of the integration nodes on the outer boundary and 1/6 on each of the two interior boundaries, rounded as appropriate. Visualized is the geometry with $\theta_1=0$ and $\theta_2=\pi$.}\label{fig:demo3b}
 \end{figure}

\jrev{
\subsection{Stokes flow in a \secondrev{3D domain with a moving interior hole}}\label{sec:cowupd}
In our third experiment (visualization in Figures~\ref{fig:cow_cross}, \ref{fig:cow}), we simulate Stokes flow \secondrev{in a 3D domain with a spherical outer boundary, a spherical interior hole, and a cow-shaped interior hole.} The double layer potential is now
\begin{equation}\label{eq:dl3d}
D\mu \coloneqq -\int_{\Gamma}\frac{3}{4\pi}\frac{(x-y)\cdot n(y)}{\|x-y\|_2^5}(x-y)\otimes(x-y)\mu(y)\mathrm{d}y.
\end{equation}
The Stokeslets and rotlets are 
\begin{equation}
S_i\alpha_i \coloneqq \frac{1}{8\pi}\left( \frac{1}{\|{x}-{c}_i\|_2}I +
\frac{
({x}-{c}_i)
\otimes 
({x}-{c}_i)
}{\|{x}-{c}_i\|_2^3}
\right){\alpha}_i, 
\end{equation}
\begin{equation}
    R_i \beta_i \coloneqq \frac{1}{8\pi\|{x}-{c}_i\|_2^3}\beta_i\times ({x}-{c}_i).
\end{equation}}
\jrev{We use the following Dirichlet data as boundary conditions
\begin{equation}\label{eq:ex4data}
f(x) = \begin{cases}
(0,0,1) & x\in \Gamma_0 \\
(0,0,0) & x\notin\Gamma_0
\end{cases}.
\end{equation}
}
\jrev{We discretize using triangulations of the boundaries and using the surface areas of the triangles as integration weights. The interior hole has radius equal to 0.1 times that of the outer boundary, and is discretized with about 0.01-0.03 times the number of triangles on the outer boundary. \secondrev{The number of triangles on the cow is fixed at 5,856.} The proxy surfaces are discretized using trapezoidal quadrature for the azimuthal parameter and Gauss-Legendre quadrature for the polar parameter. The reason for using different discretizations for the boundaries versus the proxy surface is that we wish to keep the boundary discretization easily generalizeable to different meshes, whereas it is typically reasonable to use a sphere as the proxy surface.}

\jrev{
We see an increased opportunity for parallelism in factorization in three dimensions\textemdash Figure~\ref{fig:sphere_par_plot} shows a greater scaling with number of threads than we saw in \secondrev{Section~\ref{sec:ex1}}, as expected due to the greater relative cost per level of higher dimensions described in Theorem~\ref{thm:rscost}. However, due to the fraction of work contained at the root node we see \secondrev{less significant} gains in parallelism within the linear solve. As the factorization is orders of magnitude more expensive than the solves, we believe it is far more important to explore parallelism in that process. Furthermore, if the solution to many linear systems is desired, as is often the motivation for using a direct solver, Figure~\ref{fig:sphere_par_plot} suggests that it is better to simply compute the linear solves in parallel rather than trying to parallelize each solve.}

\thirdrev{We remark that the timing results in Figure~\ref{fig:sphere_par_plot} for one thread indicate an approximately linear scaling in the number of discretization points, whereas Corollary~\ref{cor:cost} establishes costs for the factorization and solve that are worse than linear. We observe this to be caused by a slower growth in the number of skeleton indices at the root node than assumed for intrinsic dimension $d=2$ in \eqref{eq:kl}. More generally, the extent to which the skeleton growth tightly follows this bound can be highly geometry-dependent and, in this experiment, is likely affected by the fixed number of discretization points used for the cow across all problem sizes.}

\secondrev{After the initial factorization, we modify the boundary by moving the interior spherical hole downwards by a distance equal to 0.2 times the outer boundary radius, and the factorization is updated based on this perturbation. In Figure~\ref{fig:sphere_upd_plot} we see that updating the factorization based on the technique in Section~\ref{sec:factupdate} results in a notable speedup over recomputing the factorization from scratch, although the speedup is less substantial than in Figure~\ref{fig:ex1speedup}. The cost of computing a linear solve in the perturbed geometry is effectively the same regardless of whether that factorization is from scratch or from an update.}

\secondrev{\thirdrev{In this experiment, the gains seen from updating are smaller than seen in Figure~\ref{fig:ex1speedup} due to the fact that the relative amount of work done in factoring the root node is greater in this case. Furthermore, when the skeleton growth follows the assumption in~\eqref{eq:kl}, this is the expected behavior with increasing dimension.} Using the recompression techniques of HIF-IE can mitigate this, although there is a tradeoff\textemdash updating a HIF-IE factorization requires recompressing a greater number of tree nodes. For example, if only one node at the second level of a 3D tree contains perturbed points, then updating the recursive skeletonization we implemented will require recompression of up to 27 nodes at that level, whereas updating a HIF-IE factorization will require recompression of up to 64 nodes\textemdash the entire level. To address this,} we are currently working on developing techniques which would not require recomputations near the top of the tree.

\begin{figure}[ht]
  \centering
  \includegraphics[scale=0.33]{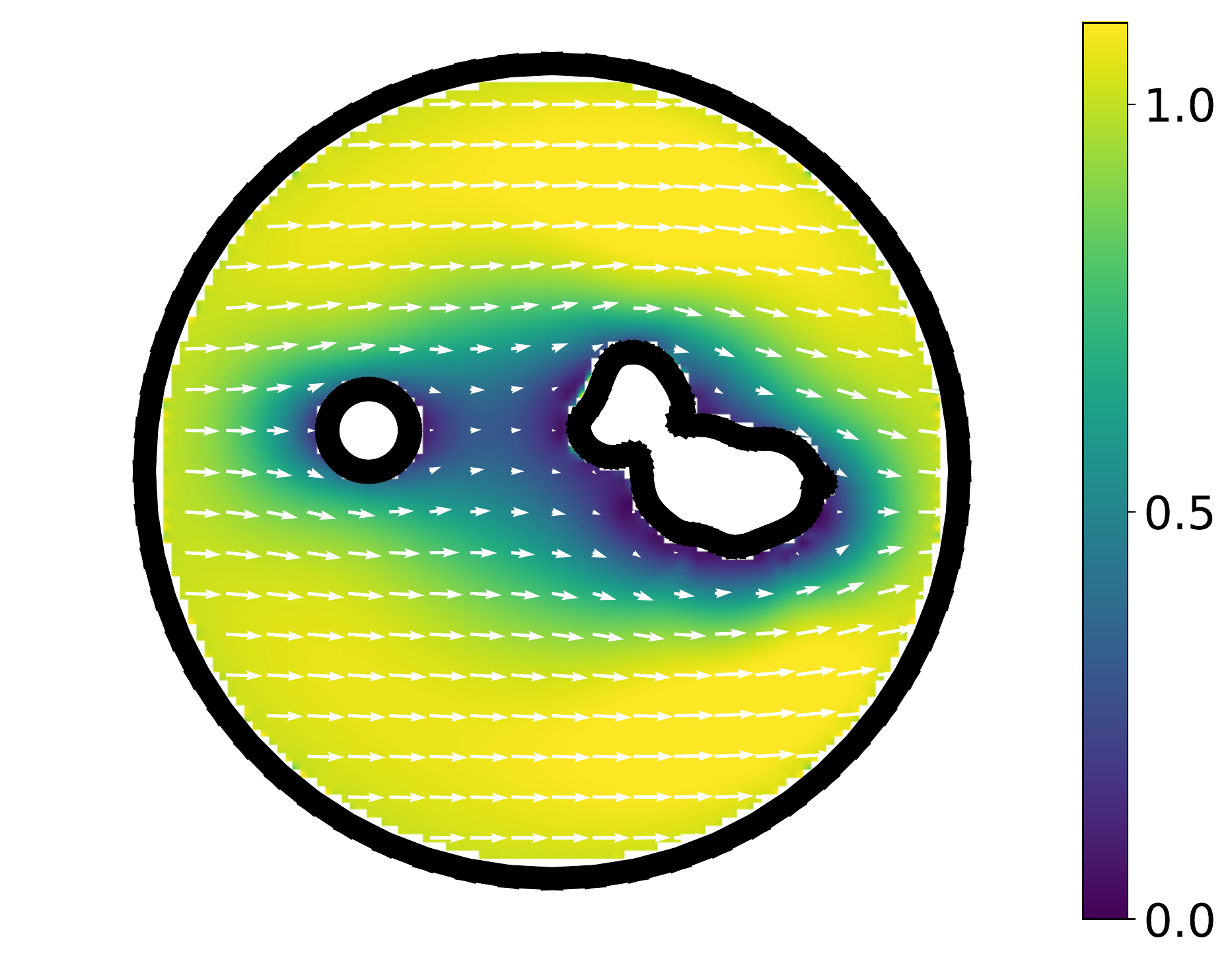}
  \includegraphics[scale=0.33]{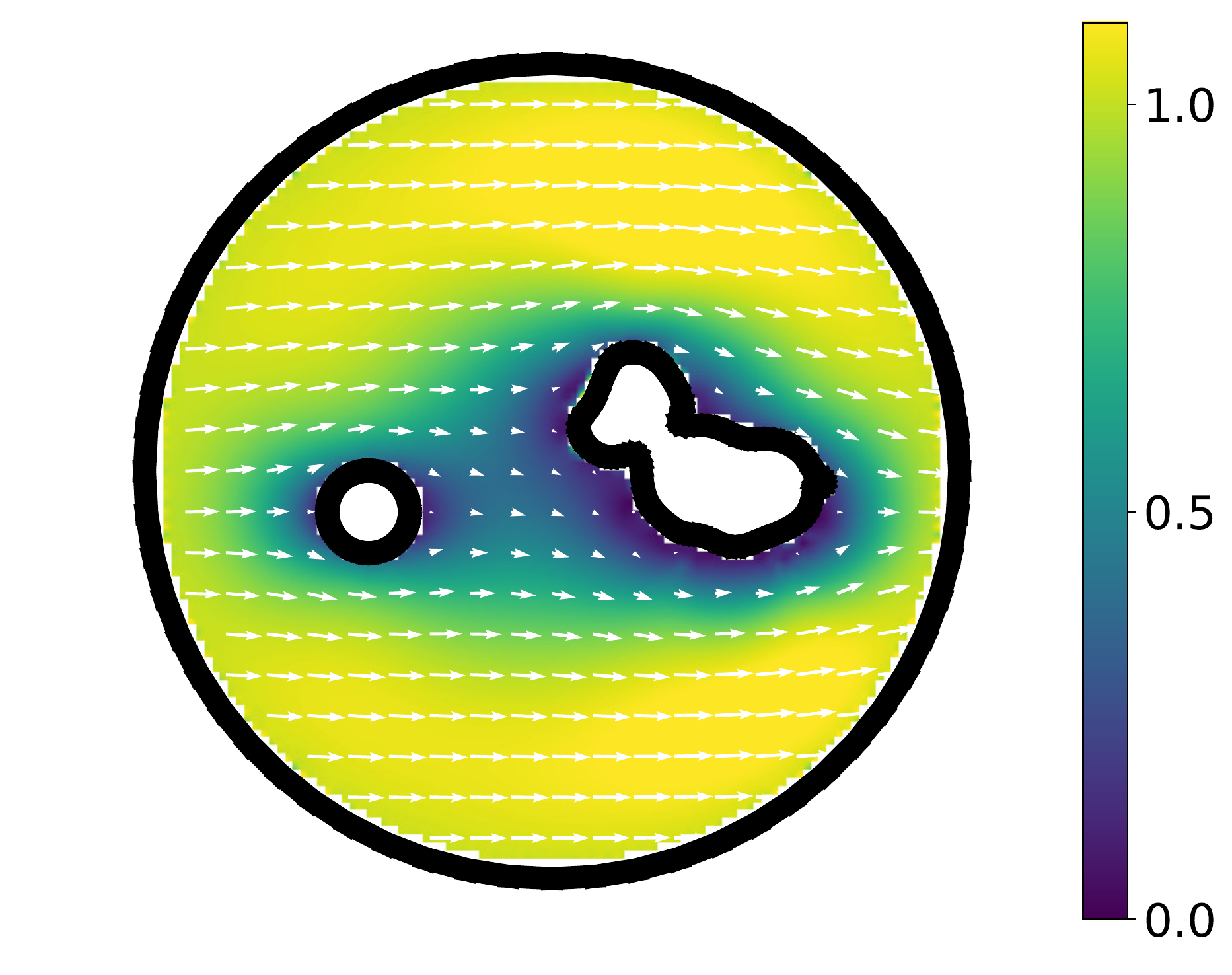}
  \caption{\secondrev{Cross-section visualizations of Stokes flow in a 3D domain. For the above, we used 14,470 discretization nodes on the outer sphere, 570 on the inner sphere, and 5,856 on the cow. On the left and right are solutions computed before and after the boundary update described in Section~\ref{sec:cowupd}.}}\label{fig:cow_cross}
\end{figure}

\begin{figure}[ht]
  \centering
  \includegraphics[trim=550 200 500 100,clip=true,scale=0.25]{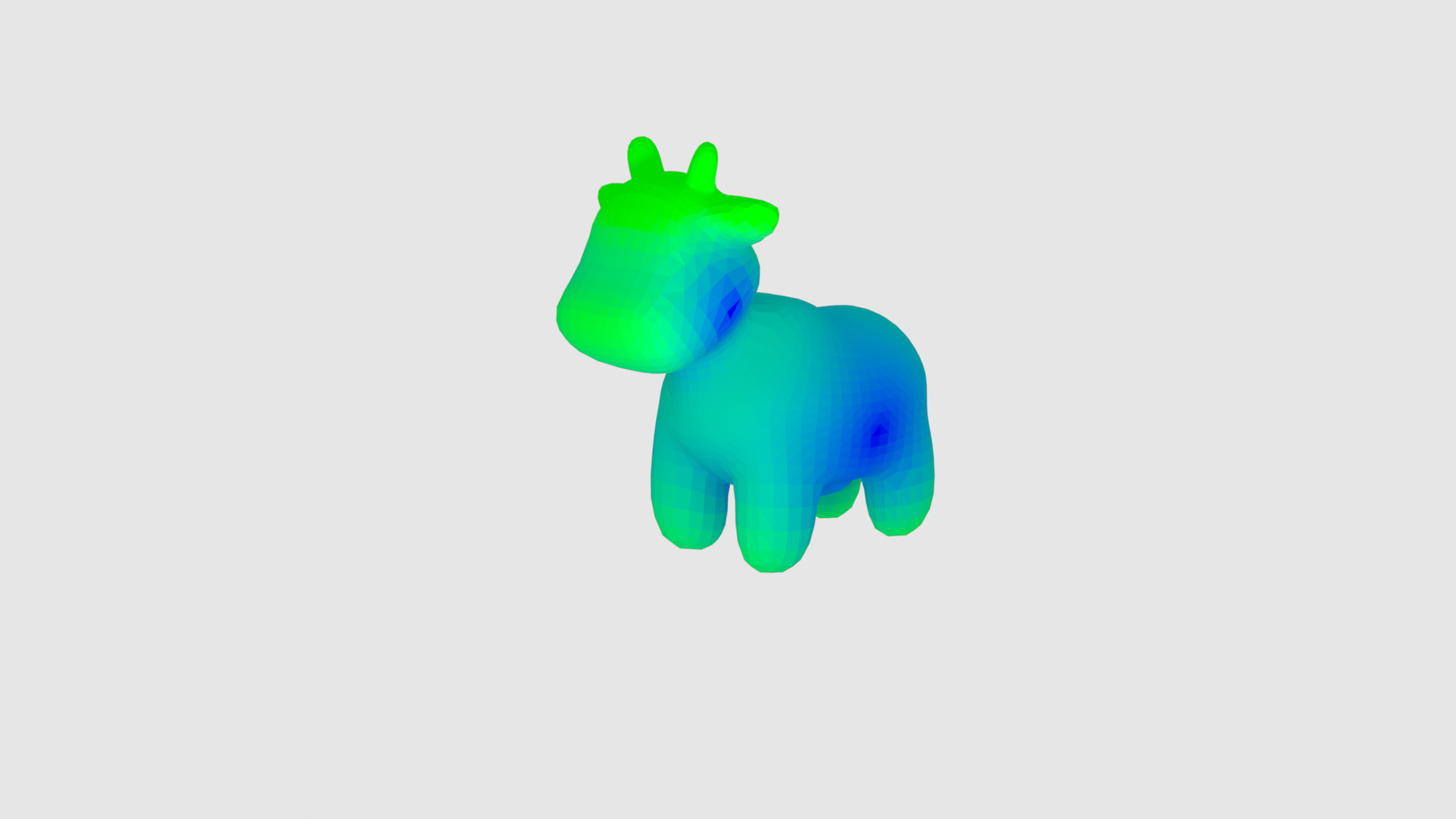}  \caption{\secondrev{The cow-shaped interior hole used in the problem described in Section~\ref{sec:cowupd}. The color corresponds to the magnitude of the double-layer potential found on the cow-shaped boundary in Figure~\ref{fig:cow_cross} \textemdash the magnitude is larger in green regions and smaller in blue regions.}}\label{fig:cow}
\end{figure}

\begin{figure}[ht]
  \centering
  \includegraphics[scale=0.25]{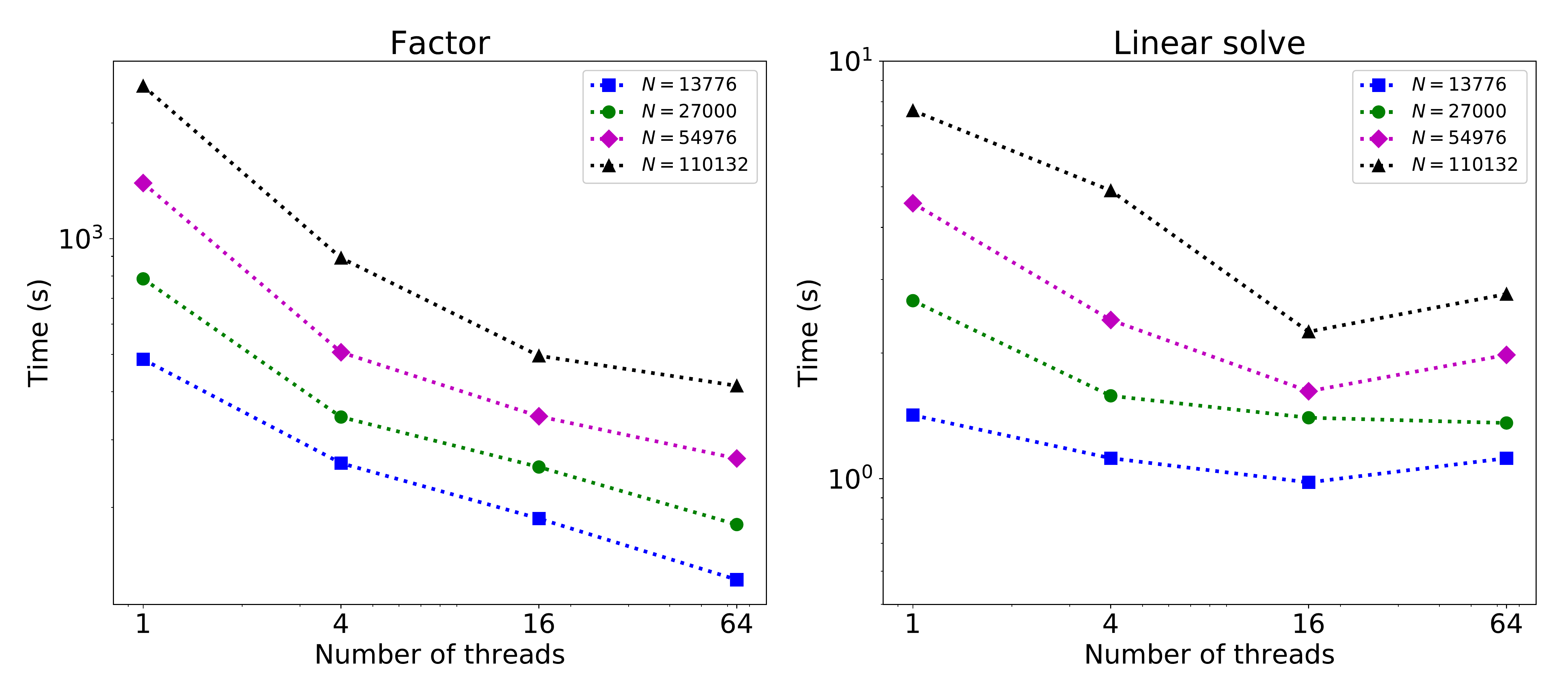}
  \caption{\jrev{Parallelizing the initial factorization for \secondrev{the system described in Section~\ref{sec:cowupd}} results in notable speedups up to \secondrev{64 threads}. In our experiments, the linear solves exhibited \secondrev{less} benefit from parallelizing, presumably due to the fraction of the work involving the root node.}}\label{fig:sphere_par_plot}
\end{figure}
\begin{figure}[ht]
  \centering
  \includegraphics[scale=0.3]{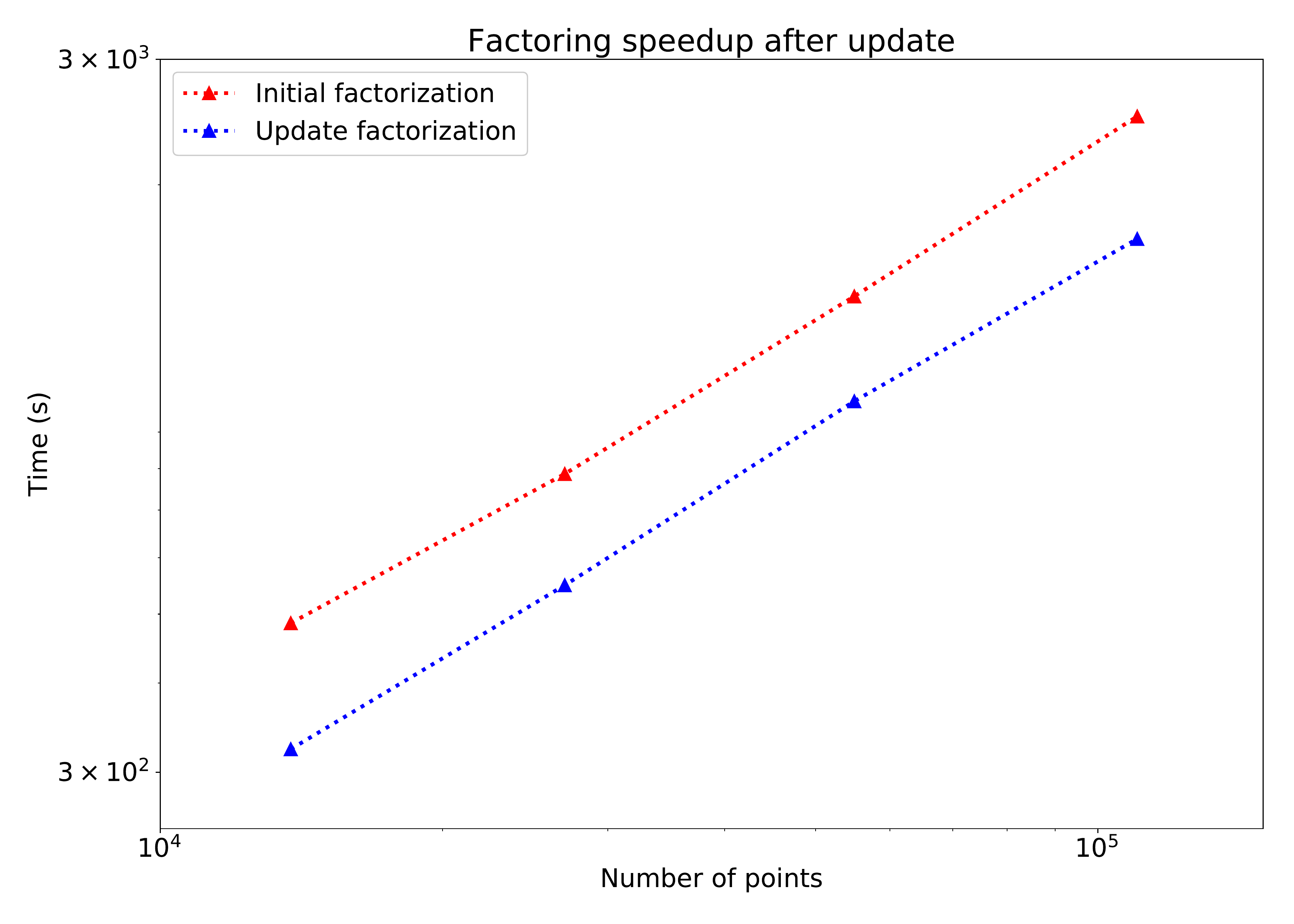}
    \caption{\secondrev{The updating scheme of Section~\ref{sec:factupdate} results in a 1.5x speedup in the factorization of the linear system described in Section~\ref{sec:ex1}. Plotted is the speedup following movement of an interior hole. When scaling the total number of points, the ratio of points on the larger sphere to points on the smaller sphere is fixed, as is the total number of points on the cow. This means that as the number of points grows we are updating a fixed fraction of the domain.}} \label{fig:sphere_upd_plot}
\end{figure}

\section{Conclusions}\label{sec:conclusions}

In this paper, we have demonstrated the applicability of skeletonization factorizations amenable to fast updating in solving large numbers of related boundary value problems. This can occur, for example, in geometry optimization or time dependent problems. Furthermore, we developed a \jrev{novel} approach to solving problems where the kernel \jrev{is singular and its} discretization is only a subblock of the whole system. This occurs, for example, when the domain is multiply-connected, and the relevant integral operators contain non-trivial null-spaces. The efficiency and parallelizability of our routines show great promise in areas where kernel matrices need to be factored multiple times following small updates to the underlying data points. Relevant areas include Gaussian process problems \cite{mle, greengaussian}, unsteady fluid simulations \cite{navier}, and shape optimization of elastic structures \cite{elastic}.

While we take one specific approach to selecting skeleton points during compression, other methods may allow for greater parallelism between levels and reduce the number of computations needed to handle geometric perturbations.

Since the updating strategy currently relies on the locality of perturbations for its efficiency, further work in this area should include improving techniques for global geometry updates so that, e.g., gradient-based optimization routines do not suffer from updating the entire boundary between steps. This may require a fundamental change to the way leaf-level compression is performed and is the subject of ongoing work.

\section*{Acknowledgments}\jrev{
The authors thank Victor Minden for several helpful conversations, \secondrev{Keenan Crane for making the cow mesh freely available~\cite{Model3D},} and the anonymous reviewers whose many thoughtful comments helped improve this manuscript.}
\bibliographystyle{siamplain}
\bibliography{references}
\end{document}